\theoremstyle{plain}
\newtheorem{theorem}{Theorem}
\newtheorem{proposition}{Proposition}
\newtheorem{corollary}[theorem]{Corollary}
\newtheorem{remark}{Remark}
\theoremstyle{definition}
\newtheorem{definition}[theorem]{Definition}
\newtheorem{example}[theorem]{Example}
\newcommand{\C}{\mathscr{C}}
\newcommand{\pa}{\mathcal{P}}
\newcommand{\ra}{\rightarrow}
\newcommand{\raa}{\ran\ran}
\newcommand{\laa}{\langle\langle}
\newcommand{\A}{{\mathbb A}}
\newcommand{\N}{{\mathbb N}}
\newcommand{\X}{{\mathbb X}}
\newcommand{\Al}{\mathbb{\Sigma}}
\newcommand{\Lo}{\mathbb{L}}
\newcommand{\Lsl}{^{\Al_0}\mathbb{L}_+}
\newcommand{\sh}{\sigma}
\newcommand{\Lb}{\mathcal{P}}
\newcommand{\Nb}{\mathcal{N}}
\newcommand{\la}{\langle}
\newcommand{\ran}{\rangle}
\newcommand{\lam}{\lambda}
\newcommand{\om}{\omega}
\newcommand{\msA}{\mathscr{A}}
\newcommand{\spl}{\circ_s}
\newcommand{\kap}{\kappa}
\newcommand{\K}{\mathbb{K}}
\providecommand{\keywords}[1]
{
	\small	
	\textbf{\textit{Keywords---}} #1
}
\providecommand{\subjclass}[1]
{
	\small	
	\textbf{\textit{ Subject class---}} #1
}
\title{Abstract, keywords and references template}
\author{Miguel A. M\'endez\\
[-0.8ex]\small Universidad Yachay\\
	[-0.8ex]\small School of Mathematical\\[-0.8ex]\small \& Computational Sciences \\
[-0.8ex]\small Urcuqu\'i, Ecuador.  
\\[-0.8ex]\small\tt mmendezenator@gmail.com}
\title{Shift-Plethystic Trees and Rogers-Ramanujan Identitites.}
\begin{document}

\begin{abstract}By studying non-commutative series in an infinite alphabet we introduce shift-plethystic trees and a class of integer compositions as new combinatorial models for the Rogers-Ramanujan identities. We prove that the language associated to shift-plethystic trees can be expressed as a non-commutative generalization of the Rogers-Ramanujan continued fraction. By specializing the noncommutative series to $q$-series we obtain new combinatorial interpretations to the Rogers-Ramanujan identities in terms of signed integer compositions. We introduce the operation of shift-plethysm on non-commutative series and use this to obtain interesting enumerative identities involving compositions and partitions related to Rogers-Ramanujan identities. 
\end{abstract}

\maketitle
\subjclass{Primary 05A17, 11P84; Secondary 05A15, 05A19}\\
\noindent\keywords{Rogers-Ramanujan identities, Integer Compositions, Non-commutative series}\\

\section{Introduction}
The Rogers-Ramanujan identities, equations  (\ref{eq.RR2}) and (\ref{eq.RR1}), have had a fructiferous influence in many, some of them unexpected, subjects in Mathematics and Physics.

\begin{eqnarray}\label{eq.RR2}\sum_{n=0}^{\infty}\frac{ q^{n^2}}{(1-q)(1-q^2)\dots (1-q^n)}&=&\prod_{k=0}^{\infty}\frac{1}{(1-q^{5k+1})(1-q^{5k+4})}\\\label{eq.RR1}
\sum_{n=0}^{\infty}\frac{q^{n(n+1)}}{(1-q)(1-q^2)\dots (1-q^n)}&=&\prod_{k=0}^{\infty}\frac{1}{(1-q^{5k+2})(1-q^{5k+3})}
\end{eqnarray}
 They were discovered and proved by Rogers in 1894 \cite{rogers1894second}, rediscovered by Ramanujan (without proof) in 1913, and again by I. Schur in 1917   \cite{Schur1917}. It is impossible to summarize in a few lines the enormous amount of contributions related  to the Rogers-Ramanujan identities and their generalizations.
 The reader is referred to the recent book of Sills \cite{sills2017invitation}, for further references and a nice introduction to the subject in its historical context. 
In \cite{rogers1894second} Rogers presented what is now known as the Rogers-Ramanujan continued fraction $\mathcal{R}(q)$,

$$\mathcal{R}(q)=q^{\frac{1}{5}}\frac{1}{1+\cfrac{q}{1+\cfrac{q^2}{\ddots}}}$$
 and proved that $$\mathcal{R}(q)=q^{\frac{1}{5}}\frac{\sum_{n=0}^{\infty}\frac{q^{n(n+1)}}{(1-q)(1-q^2)\dots (1-q^n)}}{\sum_{n=0}^{\infty}\frac{ q^{n^2}}{(1-q)(1-q^2)\dots (1-q^n)}}.$$

In what follows we shall drop the factor $q^
{\frac{1}{5}}$ from $\mathcal{R}(q)$, since our main concern here is about the combinatorial meaning of the Rogers-Ramanujan continued fraction and identities. MacMahon \cite{MacMahonbook} and Schur \cite{Schur1917} were the first in reporting the combinatorial meaning of the Rogers-Ramanujan identities.  The left-hand side of (\ref{eq.RR2}) is the generating function for the number of partitions of positive parts with a difference of at least two among adjacent parts ($2$-distinct partitions in the terminology of \cite{Andrews2004}). Its right hand side is the generating function of the partitions with each part congruent either with one or four module five. Similarly, the left-hand side of (\ref{eq.RR1}) is the generating function for the number of $2$-distinct partitions, but having each part strictly greater than one. Its right hand side counts the number of partitions with each part congruent either with two or three module five.  
Hence, each of them establish an equipotence between two different sets of partitions. Garsia and Milne gave a  bijective proof of the Rogers-Ramanujan identities by establishing  a complicated bijection between these two kinds of partitions \cite{garsia1981rogers}. For that end they created what now is called the Garsia-Milne involution principle. The Garsia-Milne proof was later simplified in \cite{bressoud1982short}.\\
We introduce here a noncommutative version of $\mathcal{R}(-q)$ in an infinite number of variables $$X_0, X_1, X_2,X_3,\dots,$$ and prove that its expansion is the language of words associated to a combinatorial structure we call shift-plethystic trees.         
 Our model based on shift-plethystic trees lead us to consider compositions (instead of partitions) whose risings are at most one,  and express the non-commutative version of $\mathcal{R}(-q)$ as a quotient of two generating functions on this kind of compositions. We call a $q$-umbral evaluation on a noncommutative series the procedure of substituting each variable $X_k$ by $zq^k$ or simply by $q^k$. By $q$-umbral evaluation of those generating functions we obtain an alternative (dual) combinatorial interpretation of Rogers-Ramanujan identities in terms of signed compositions (Section \ref{sec.RRcompositions}). A combinatorial understanding of the cancellations taking place in the signed compositions that we obtain would provide an elegant and, hopefully, simple proof of the Rogers-Ramanujan identities.  In Section \ref{sec.splety} we introduce  shift plethysm of non-commutative series. It generalizes the classical substitution of $q$ series. By $q$-umbral evaluating shift-plethysm on a particular class of non-commutative series we obtain the classical substitution of $q$-series. By means of elementary computation of inverses on generalized shift-plethystic trees we recover some classical identities in Subsection \ref{sec.pletitrees}, and prove in Section \ref{sec.RRandnew} new ones relating Rogers-Ramanujan identities, compositions, partitions and shift plethystic trees.
 
Previous work on non-commutative versions of the Rogers-Ramanujan continued fractions can be found  in \cite{Berenstein2019} and \cite{Pak}. Although their approach does not rely on an infinite number of variables, a coupling of both approaches would lead to novel identities involving signed compositions.

\section{Formal power series in non commuting variables}
Let $\mathbb{A}$ be be an alphabet (a totally ordered set) with at most a countable number of elements (letters). Let $\A^*$ be the free monoid generated by $\A$. It consists of words or finite strings of letters in $\A$, $\om=\om_1 \om_2\dots \om_n$, including the empty string represented as $1$. We denote by $\ell(\om)$ the length of $\om$.
Let $\K$ be a field of characteristic zero. A \emph{noncommutative} formal power series in $\A$ over $\K$ is a function $R:\A^*\rightarrow \K$. We denote $R(\om)$ by $\la R,\om\ran$ and represent $R$ as a formal series

$$R=\sum_{\om\in\mathbb{A}^*}\langle R,\om\rangle\,\om, \;\langle R,\om\rangle\in\mathbb{K},$$
 
The sum and product of two formal power series $R$ and $S$ are respectively given by 
\begin{eqnarray*}
	R+S&=&\sum_{\om\in\A^*}(\la R,\om\ran+\la S,\om\ran) \om\\R.S&=&\sum_{\om\in \mathbb{A}^*}(\sum_{\om_1\om_2=\om}\langle R,\om_1\rangle \langle S,\om_2\rangle) \omega. \end{eqnarray*}
The algebra of noncommutative formal power series is denoted by $\K\la\la\A\ran\ran$.
There is a notion of convergence on $\K\la\la\A\ran\ran$. We say that $R_1, R_2, R_3,\dots$ converges to $R$ if for all $\om\in \A^*$, $\la R_n,\om\ran= \la R,\om\ran$ for $n$ big enough. If $\la R,1\ran=\alpha\neq 0$, then $R$ has an inverse given by (see Stanley \cite{stanley5001enumerative})
$$R^{-1}=\frac{1}{\alpha}\sum_{n=0}^{\infty}\left(1-\frac{R}{\alpha}\right)^n.$$

Let $B$ be a series having constant term equal to zero, $\la B,1\ran =0$. We denote by $\frac{1}{1-B}$, the inverse of the series $1-B$,
$$\frac{1}{1-B}:=(1-B)^{-1}=\sum_{n=0}^\infty B^n.$$

A {\em\textcolor{blue} {language}} (on $\A$) is a subset of $\A^*$. We identify a language $L$ with its generating function, the formal power series
$$L=\sum_{\om\in L} \om.$$
We consider now a special kind of languages obtained from a given set of `links' $B\subseteq\A\times\A$. Define
$$L_B=\{\om|(\om_i,\om_{i+1})\in B, \mbox{ for every }i=1,2,\dots,\ell(\om)-1\},$$
and the language $L$ associated to $B$ by
\begin{equation}\label{eq.linked}L=1+\A+L_B.
\end{equation}
We shall call an $L$ of this form a {\em \textcolor{blue}{linked language}}. 
Define the K-dual $L^!$ to be the language associated with the complement set of links
$$L^!=1+\A+L_{B^c}$$
For linked languages we define a second formal power series, 
$$L^{g}=\sum_{\om\in L}(-1)^{\ell(\om)}\om.$$ 
We call it the {\em \textcolor{blue}{graded}} generating function of $L$. 
We have the following inversion formula for linked languages. It is a non-commutative version of  Theorem 4.1. in Gessel PhD thesis, \cite{Gesselthesis}, from where we borrow the terminology of linked sets. Propositions \ref{prop.kdual1} and \ref{prop.kdual2} are indeed particular instances of inversion formulas on generating functions for Koszul algebras and Koszul modules over Koszul algebras. Koszul algebras were introduced in \cite{priddy1970koszul},  see also \cite{polishchuk2005quadratic} for more details on Koszul algebras and the inversion formulas for generating functions of Koszul algebras and modules.
	
\begin{proposition}\label{prop.kdual1}
	Let $L$ be a linked language, and $L^!$ its K-dual. Then we have 
	\begin{equation}
	L^!=(L^g)^{-1}.
	\end{equation}
\end{proposition}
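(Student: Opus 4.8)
The plan is to prove the equivalent statement $L^!\cdot L^g=1$, from which the proposition follows immediately: since $\la L^g,1\ran=(-1)^0=1\neq 0$, the series $L^g$ is invertible in $\K\la\la\A\ran\ran$ by the inversion formula recalled above, and in a ring with unit a left inverse of an invertible element coincides with its two-sided inverse (if $ba=1$ and $a^{-1}$ exists, then $b=b(aa^{-1})=(ba)a^{-1}=a^{-1}$). Hence establishing $L^!\cdot L^g=1$ yields $L^!=(L^g)^{-1}$ at once. Expanding the Cauchy product, the coefficient of a word $\om=\om_1\cdots\om_n$ is
$$\la L^!\cdot L^g,\om\ran=\sum_{\substack{\om=uv\\ u\in L^!,\,v\in L}}(-1)^{\ell(v)}.$$
For $\om=1$ the only factorization is $u=v=1$, contributing $1$, which gives the desired constant term.

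For $\om\neq 1$ the goal is to show this signed sum vanishes. I would encode each factorization $\om=uv$ by its cut position $k$, writing $u=\om_1\cdots\om_k$ and $v=\om_{k+1}\cdots\om_n$ for $0\le k\le n$, and record for each adjacent pair $j$ whether $(\om_j,\om_{j+1})\in B$ or $(\om_j,\om_{j+1})\in B^c$. Membership $u\in L^!$ forces every pair strictly inside $u$ to be a $B^c$-link, and $v\in L$ forces every pair strictly inside $v$ to be a $B$-link, while the single straddling pair $(\om_k,\om_{k+1})$ (present only when $1\le k\le n-1$) is left unconstrained. Thus a cut at $k$ is admissible precisely when all pairs strictly to its left are $B^c$-links and all pairs strictly to its right are $B$-links.

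The heart of the argument is the analysis of which cuts are admissible. I would prove the dichotomy: if some adjacent pair is a $B$-link immediately followed by a $B^c$-link (a ``descent'' in the link-pattern), then the two constraints are contradictory and no cut is admissible, so the coefficient is $0$; otherwise the link-pattern is weakly increasing, of the form $(B^c)^a(B)^b$ with $a+b=n-1$, and exactly two admissible cuts occur, at the consecutive positions $k=a$ and $k=a+1$. The corresponding words $v$ have lengths differing by one, so their contributions $(-1)^{\ell(v)}$ are opposite and cancel. Equivalently, this cancellation can be packaged as a sign-reversing involution that slides the cut across the lone unconstrained straddling link.

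The main obstacle is exactly this combinatorial dichotomy: showing that a descent annihilates all admissible cuts while the absence of a descent leaves precisely two adjacent ones. It is elementary but deserves care, being carried out by locating the cut relative to the maximal initial block of $B^c$-links and the maximal terminal block of $B$-links; in particular the boundary cases $k=0$ and $k=n$, where no straddling pair exists, must be checked separately to confirm that the count of two admissible cuts still holds (e.g.\ the all-$B^c$ pattern admits only $k=n-1,n$, and the all-$B$ pattern only $k=0,1$). Once this dichotomy is established, the vanishing of every non-constant coefficient is immediate and the identity $L^!=(L^g)^{-1}$ follows.
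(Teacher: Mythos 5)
Your proof is correct and is essentially the paper's argument in mirror image: the cancellation between the two admissible cuts $k=a$ and $k=a+1$ is exactly the paper's sign-reversing involution $\phi$, which slides a single letter across the $u\mid v$ boundary, here stated coefficientwise, with your link-pattern dichotomy (a descent kills all cuts; otherwise exactly two adjacent cuts survive) serving as the pointwise description of its orbits and unique fixed point $(1,1)$. The only substantive differences are that you verify the left-inverse identity $L^!\cdot L^g=1$ where the paper verifies $L^g\cdot L^!=1$, and that you make explicit the passage from a one-sided inverse to $L^!=(L^g)^{-1}$ via the invertibility of $L^g$ (constant term $1$), a step the paper leaves tacit.
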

\begin{proof}
The product $L^g.L^!$ is equal to
\begin{equation}
L^g.L^!=\sum_{(\om,\om')\in L\times L^! }(-1)^{\ell(\om')}\om.\om'.
\end{equation}
Define the function $\phi:L\times L^!\ra L\times L^!$,
$$\phi(\om_1\om_2\dots\om_k,\om'_1\om'_2\dots\om'_j)=(\om_1\om_2\dots\om_k\om'_1,\om'_2\dots\om'_j)$$ when $(\om_k,\om'_1)\in B$ or if $\om=1$ and $\om'\neq 1.$ Make  
$$\phi(\om_1\om_2\dots\om_k,\om'_1\om'_2\dots\om'_j)=(\om_1\om_2\dots,\om_k\om'_1\om'_2\dots\om'_j)$$ if $(\om_k,\om'_1)\in B^c$ or if $\om'=1$ and $\om\neq 1$. Finally, make $\phi(1,1)=(1,1)$. The function $\phi$ is a sign reversing involution when restricted to the signed set $L\times L^!-\{(1,1)\}$. Moreover  $\phi(1,1)=(1,1)$. Hence
$L^!L^g=1.1=1$.
\end{proof}
\begin{example}
	Let $\X_+$ be the infinite alphabet $\{X_1,X_2,X_3,\dots\}$. Denote by $\pa$ the set of partitions $\lambda$ written in weak increasing order, $\lam_1\leq\lam_2\leq\lam_3\dots$. The set $\pa$ is represented as a language with letters in $\X_+$,
	\begin{equation*}
	\pa=\sum_{\lam \in }X_{\lam}=\lim_{m\rightarrow \infty}\prod_{n=1}^{m}\frac{1}{1-X_n}=\prod_{n=1}^{\infty}\frac{1}{1-X_n}.
	\end{equation*}
	It is a linked language, with set of links $B=\{(X_i,X_j)|i\leq j\}$. The complement is the set $B^c=\{(X_i,X_j)|i>j\}$, and hence the $K$-dual language $\pa^!$ is the generating functions of the set of partitions in decreasing order with distinct parts.
	The graded generating function of $\pa$ is equal to 
	\begin{equation*}
	\pa^g=\sum_{\lam \in \pa}(-1)^{\ell(\lam)}X_{\lam}=\lim_{m\rightarrow\infty}\prod_{n=1}^{m}\frac{1}{1+X_n}=\prod_{n=1}^{\infty}\frac{1}{1+X_n}
	\end{equation*}
	By Proposition \ref{prop.kdual1}, since taking inverses is a continuous operation,  
	\begin{equation*}
	\pa^!=(\pa^g)^{-1}=\lim_{m\rightarrow\infty}(1+X_m)(1+X_{m-1})\dots(1+X_1)
	\end{equation*}
	This limit can be (symbolically) written  as the product
	$\prod_{n=\infty}^{1}(1+X_n).$
	Since $$(1+X_{m})(1+X_{m-1})\dots(1+X_1)=1+\sum_{n=1}^m X_n(1+X_{n-1})(1+X_{n-2})\dots(1+X_1),$$ $\pa^!$ is then equal to the series 
	$$ \pa^!=\prod_{n=1}^{\infty}(1+X_n)=1+\sum_{n=1}^m X_n(1+X_{n-1})(1+X_{n-2})\dots(1+X_1).$$
Analogously, the $K$-dual of the language of partitions written in decreasing order, is the language of partitions with different parts, written in increasing order
\begin{equation}\label{eq.pertitiinsdecreasing}
\left(\prod_{n=\infty}^{1}\frac{1}{1-X_n}\right)^!=(1+\sum_{n=1}^{\infty}X_n\prod_{j=n}^1\frac{1}{1-X_n})^!=\prod_{n=1}^{\infty}(1+X_n)
\end{equation} 	
	 
\end{example}
 Let $L$ be a linked language $L=1+\A+L_B$. Given a subset  $\A_1$ of $\A$, and another set of links $C\subseteq \A_1\times\A$, let $N\subseteq\A_1\times\A^*$, be the language defined by
 $$N=\A_1+L_{C,B},$$
 where 
 $$L_{C,B}=\{\om|(\om_1,\om_2)\in C\mbox{ and } (\om_i,\om_{i+1})\in B,\; i=2,3,\dots,\ell(\om)-1\}.$$
 The language $N$ will be called a right (linked) $L$-module.
 Denote by $N^!$ (called the $K$-dual of $N$)  the $L^!$-module defined by
 $$N^!=\A_1+L_{C^c,B^c},$$
 where the complement of $C$ is taken over the set $\A_1\times \A$, $C^c=\A_1\times \A-C$.
 \begin{proposition}\label{prop.kdual2}
 	The generating function for the language $N^!$ defined as above is given by the formula
 	\begin{equation}\label{eq.rightmodule}
 	N^!=N^g L^!=N^g(L^g)^{-1},
 	\end{equation}
 	\noindent where the graded generating function $N^g$ is defined as
 	\begin{equation}
 	N^g=\sum_{\om\in N}(-1)^{\ell(\om)-1}\om.
 	\end{equation}
 \end{proposition}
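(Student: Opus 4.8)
The plan is to follow the template of Proposition~\ref{prop.kdual1}. Since that proposition already gives $L^!=(L^g)^{-1}$, the two right-hand sides of \eqref{eq.rightmodule} agree, and it suffices to prove the single identity $N^!=N^gL^!$. Expanding the product,
\begin{equation*}
N^gL^!=\sum_{(\om,\om')\in N\times L^!}(-1)^{\ell(\om)-1}\,\om.\om',
\end{equation*}
where $\om=\om_1\cdots\om_k\in N$ and $\om'=\om'_1\cdots\om'_j\in L^!$, so that each concatenation $\om.\om'$ is weighted by the sign $(-1)^{\ell(\om)-1}$ coming from the length of its $N$-factor.

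Next I would build a sign-reversing involution $\psi$ on $N\times L^!$ that slides one letter across the junction between $\om$ and $\om'$, in the spirit of the map $\phi$ above. The new feature is that the junction must be tested against the correct linking set: when $k\geq 2$ the letter $\om_k$ sits inside the $L$-part of $N$, so the junction link $(\om_k,\om'_1)$ is compared with $B$ versus $B^c$ as in Proposition~\ref{prop.kdual1}; but when $k=1$ the junction is the prospective first link of the combined word and must be compared with $C$ versus $C^c$ instead. Thus $\psi$ appends $\om'_1$ to $\om$ exactly when the junction link is admissible for $N$ (in $B$ if $k\geq 2$, in $C$ if $k=1$), and otherwise transfers $\om_k$ into $\om'$ (this second move also being forced when $\om'=1$ and $k\geq 2$). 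Because the links internal to $\om$ lie in $B$ with the first in $C$, while those internal to $\om'$ lie in $B^c$, pushing a letter across the junction always produces a junction of the opposite type, which is precisely what makes $\psi\circ\psi$ the identity; and since each move changes $\ell(\om)$ by one, $\psi$ reverses the sign.

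I would then compute the fixed points. The decisive difference from Proposition~\ref{prop.kdual1} is that a word of $N$ has length at least one, so the $N$-factor can never be emptied: a pair $(\om_1,\om')$ with $\om_1\in\A_1$ a single letter admits no admissible move when $\om'=1$ or when $(\om_1,\om'_1)\in C^c$, and these are exactly the fixed points. Their concatenations are the words whose first letter lies in $\A_1$, whose first link lies in $C^c$, and whose remaining links lie in $B^c$, that is, the elements of $\A_1+L_{C^c,B^c}=N^!$, each carrying the sign $(-1)^{1-1}=+1$. Every remaining pair is matched with a sign-opposite partner, so all other contributions cancel and $N^gL^!=N^!$; combined with Proposition~\ref{prop.kdual1} this gives \eqref{eq.rightmodule}.

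The main obstacle is the boundary bookkeeping that establishes that $\psi$ is well defined and truly an involution. One must check, in the small configurations $k=1$, $k=2$, and $\om'=1$, both that the image pair again satisfies the defining constraints of $N\times L^!$ (the first link of the new $N$-factor landing in $C$, its internal links in $B$, and the links of the new $L^!$-factor in $B^c$), and that a second application of $\psi$ recovers the original pair; it is here that the interplay between the first-link condition ($C$ versus $C^c$) and the internal-link condition ($B$ versus $B^c$) must be handled carefully. Once this verification is complete, the sign-reversal and the fixed-point count are immediate.
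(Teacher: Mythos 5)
Your proposal is correct and takes essentially the same approach as the paper: the identical sign-reversing involution on $N\times L^!$ (append $\om'_1$ when the junction link lies in $B$ for $\ell(\om)\geq 2$ or in $C$ for $\ell(\om)=1$, otherwise transfer $\om_k$ when $\ell(\om)\geq 2$), with the single-letter pairs having $\om'=1$ or junction in $C^c$ as the positive fixed points summing to $\A_1+L_{C^c,B^c}=N^!$. The boundary verifications you flag, in particular that a transfer out of a length-two $N$-factor is undone via the $C$-test because the first link of any word in $N$ lies in $C$, are precisely the checks the paper leaves implicit when it asserts $\psi$ is a sign-reversing involution.
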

\begin{proof}
We have to prove that $N^g L^!=\sum_{\om\in N^!}\om.$
We have 
$$N^g L^!=\sum_{(\om,\om')\in N\times L^!}(-1)^{\ell(\om)-1}\om\om'.$$
Define the function $\psi:N\times L^!\ra N\times L^!$ by considering the following cases.
Assume first that  $\ell(\om)\geq 2$.  If $(\om_k,\om'_1)\in B$  we make
$$\psi(\om_1\om_2\dots\om_k,\om'_1\om'_2\dots\om'_j)=(\om_1\om_2\dots\om_k\om'_1,\om'_2\dots\om'_j),$$  If $(\om_k,\om'_1)\in B^c$ or if $\om'= 1,$ define 
$$\psi(\om_1\om_2\dots\om_k,\om'_1\om'_2\dots\om'_j)=(\om_1\om_2\dots\om_{k-1},\om_k\om'_1,\om'_2\dots\om'_j).$$
Assume now that $\ell(\om)=1$. If $(\om_1,\om'_1)\in C$ define
$$\psi(\om_1,\om'_1\om'_2\dots\om'_j)=(\om_1\om'_1,\om'_2\om'_3\dots\om'_j).$$ Otherwise, if $(\om_1, \omega_1')\in  C^c$ or if  $\om'=1$ we make
$\psi(\om_1,\om')=\psi(\om_1,\om')$. The function $\psi$ is a sign reversing involution, its fixed points being of the form $(\om_1,\om')$, if either $(\om_1,\om_1')\in C^c$ or $\om'=1$. Then  
$$N^g L^!=\sum_{(\om_1,1)\in \A_1\times{1}}\om_11+\sum_{(\om_1,\om'):(\om_1,\om')\in C^c,\,\om'\in L^!-\{1\}}\om_1\om'_1=\A_1+L_{C^c,B^c}=N^!.$$
\end{proof}

\section{Shift and the shift plethystic trees language} Consider the algebra $\K\la\la\X\ran\ran$, $\X$ being the alphabet $$\X=\{X_0,X_1,X_2\dots\}.$$
Let $\kap=(\kap_1,\kap_2,\dots,\kap_m)$ be an element of $\N^m$ ( a weak composition). We denote by $X_{\kap}$ the word $X_{\kap_1}X_{\kap_2}\dots X_{\kap_n}$. As usual, the empty word will be denoted by $1$. We denote by $|\kap|$ the sum of its parts, $$|\kap|=\kap_1+\kap_2+\dots.$$

Let $R$ be an element of $\K\la\la\X\ran\ran$. The series $R$ is written as
$$R=\sum_{\kap\in \N^*}\la R, X_{\kap}\ran X_{\kap}.$$
\begin{remark}\normalfont{Let $\mathscr{S}$ be set of weak compositions. In the rest of the article, when no risk of confusion, we  identify $\mathscr{S}$ with the associated language $\{X_{\kap}|\kap\in \mathscr{S}\}$, and its generating series 
	$\sum_{\kap\in \mathscr{S}}X_{\kap}.$}
\end{remark}

We shall call $\kap$ a (strong) composition if $\kap_i\neq 0$, for every $i$. In what follows,  word `composition' will mean by defect {\em \textcolor{blue}{strong composition}}.

\begin{definition}
 Define $\sigma:\K\la\la\X\ran\ran\ra\K\la\la\X\ran\ran$ by extending the shift   $\sigma X_i=X_{i+1}$, $i=0,1,2\dots$, as a continuous algebra map. Equivalently, by making it multiplicative and to commute with the series sum symbol.
 \end{definition}
 
\begin{definition}A {\em \textcolor{blue}{shift plethystic}} (SP) tree is a plane rooted tree whose vertices are colored with colors in $\N$. The color of a given vertex indicates its height (length of the path from the root). \end{definition}
Let $T$ be a shift plethystic tree. We associate to $T$ the word $\om_T$ on $\X$, obtained by reading the vertices of $T$ in preorder from left to right as follows.
Assume that $T$ consist of $k\geq 0$ sub-trees, $T_1, T_2,\dots,T_k$, attached to The root (of color $0$). The preorder word of $T$ is then defined recursively  by
\begin{equation}\label{eq.recpre}\om_T=\begin{cases}X_0&\mbox{ if $k=0$} \\
X_0\,\sigma\om_{T_1}\,\sigma\om_{T_2}\dots\sigma\om_{T_k}&\mbox{ if $k> 0$.}\end{cases}
\end{equation} 
We denote by $\msA$ the language of shift plethystic trees,
\begin{equation}
\msA=\sum_{T}\om_T
\end{equation} 
\begin{figure}
	\begin{center}\includegraphics[width=70mm]{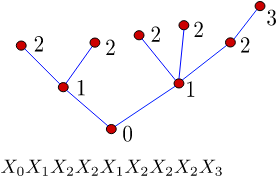}
	\end{center}\caption{Shift plethystic tree and associated word.}\label{fig.plethystictree}
\end{figure}
It is easy to check that the tree  $T$ is uniquely obtained from its word $\om_T$. 
The series $\sigma \msA$ gives us the language of shift plethystic trees with the root colored with color $1$, and every vertex colored with its height plus $1$. Similarly, $\sigma^n\msA$ is the language of shift plethystic trees, the root colored $n$ and each vertex colored $n$ plus its height.
\begin{theorem}
	The language $\msA$ can be expanded as the noncommutative  continued fraction
	\begin{equation}
	\msA=X_0\cfrac{1}{1-X_1\cfrac{1}{1-X_2\cfrac{1}{\ddots}}}=\lim_{n\ra\infty}X_0\cfrac{1}{1-X_1\cfrac{1}{1-X_2\cfrac{1}{\ddots 1- X_{n-1}\cfrac{1}{{1-X_n}}}}}
	\end{equation}
	\noindent
\end{theorem}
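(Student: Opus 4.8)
The plan is to reduce the continued fraction to a single functional equation for $\msA$ read off from the recursive definition (\ref{eq.recpre}), and then to unfold that equation while tracking convergence in $\K\la\la\X\ran\ran$. First I would establish the recursion
$$\msA=\frac{X_0}{1-\sigma\msA}.$$
An SP tree $T$ is a root of color $0$ together with an ordered list $T_1,\dots,T_k$ (with $k\geq 0$) of SP subtrees, and by (\ref{eq.recpre}) its word is $X_0\,\sigma\om_{T_1}\sigma\om_{T_2}\cdots\sigma\om_{T_k}$. Summing over all trees and grouping by the number $k$ of subtrees gives $\msA=\sum_{k\geq 0}X_0(\sigma\msA)^k$, where I use that $\sigma$ is a continuous algebra map to factor $\sum_{T_i}\sigma\om_{T_i}=\sigma\msA$, and that each fixed word receives contributions from only finitely many $k$ (its length bounds $k$), so the rearrangement is legitimate. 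Since every $\om_T$, hence every $\sigma\om_T$, is a nonempty word, $\sigma\msA$ has zero constant term, so $\sum_{k\geq 0}(\sigma\msA)^k=(1-\sigma\msA)^{-1}$ and the displayed equation holds.

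Next I would introduce the finite approximants. Set $C_0=X_0$ and $C_n=X_0(1-\sigma C_{n-1})^{-1}$. An immediate induction shows $C_n$ is exactly the $n$-th truncation on the right-hand side of the theorem: applying $\sigma$ to $C_{n-1}$ shifts its letters $X_0,\dots,X_{n-1}$ to $X_1,\dots,X_n$, so that the innermost denominator is $1-X_n$. The crucial combinatorial point is that, read through the first step, this same recursion identifies $C_n$ with the language of SP trees of height at most $n$. Indeed $C_0=X_0$ is the one-vertex tree, and if $C_{n-1}$ enumerates trees of height $\leq n-1$, then $X_0(1-\sigma C_{n-1})^{-1}=\sum_{k\geq 0}X_0(\sigma C_{n-1})^k$ enumerates roots of color $0$ carrying $k$ subtrees each of height $\leq n-1$, that is, exactly the trees of height $\leq n$.

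Finally I would prove $C_n\ra\msA$ in the topology of $\K\la\la\X\ran\ran$. Because $T\mapsto\om_T$ is injective (the tree is recoverable from its word), both $\msA$ and every $C_n$ have all coefficients in $\{0,1\}$, with $\la\msA,\om\ran=1$ iff $\om=\om_T$ for some SP tree $T$, and $\la C_n,\om\ran=1$ iff $\om=\om_T$ for some $T$ of height $\leq n$. Fixing a word $\om$: if $\om$ is not a tree word, all these coefficients vanish; if $\om=\om_T$ with $T$ of height $h$, then $\la C_n,\om\ran=1=\la\msA,\om\ran$ for every $n\geq h$. Thus $\la C_n,\om\ran=\la\msA,\om\ran$ for $n$ large, which is precisely the required convergence, and since the continued fraction on the right is by definition the limit of the $C_n$, the theorem follows.

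I expect the main obstacle to be the convergence bookkeeping rather than any single algebraic identity: one must justify the geometric-series manipulations (the vanishing constant term of $\sigma\msA$ and the finiteness of contributions to each word), and, above all, confirm that the infinite continued fraction really is the limit of the $C_n$, so that identifying $C_n$ with the height-$\leq n$ trees genuinely pins the limit down as $\msA$. The unfolding of the functional equation itself is routine once the continuity and multiplicativity of $\sigma$ are invoked.
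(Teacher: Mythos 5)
Your proposal is correct and follows essentially the same route as the paper: you derive the functional equation $\msA=X_0(1-\sigma\msA)^{-1}$ from the preorder decomposition, identify the $n$-th truncation of the continued fraction with the language of SP trees of height at most $n$, and conclude by convergence in $\K\la\la\X\ran\ran$. The only cosmetic difference is bookkeeping: the paper unfolds the exact identity keeping the remainder $\sigma^n\msA$ in the innermost denominator and then passes to the restricted languages $\msA_n$, while you build the approximants $C_n$ bottom-up and verify coefficientwise convergence directly --- your version is, if anything, slightly more careful about why the limit is pinned down.
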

\begin{proof}Assume that the root of an SP tree has $k$ children, $k\geq 0$. By the definition of $\om_T$ (Eq. (\ref{eq.recpre})), to read its colors we read first the root and then read in preorder from left to right the colors of each one (or none) of the $k$ trees above the root. Each of them will produce a word in $\sigma\msA$. Hence  we have the identity
	\begin{equation}\label{eq.rectree}\msA=X_0(1+\sigma\msA+(\sigma\msA)^2+(\sigma\msA)^3+\dots)=X_0\frac{1}{1-\sigma\msA}\end{equation}
	Applying $\sigma^{j-1}$, $j=1,2,\dots,$ to both sides of the above identity we get $$\sigma^{j-1}\msA=X_{j-1}\frac{1}{1-\sigma^j\msA}.$$
	Recursively from Eq. (\ref{eq.rectree}), we obtain
	\begin{equation}\label{eq.rectree1}\msA=X_0\cfrac{1}{1-X_1\cfrac{1}{1-X_2\cfrac{1}{\ddots 1- X_{n-1}\cfrac{1}{{1-\sigma^n\msA}}}}}.\end{equation}
	Denote by $\msA_n$ the language $\msA$ restricted to the symbols $\{X_0,X_1,\dots,X_n\}$ (the words of SP trees of height at most $n$). We have that $\lim_{n\ra\infty}\msA_n=\msA$ and since $\sigma^n\msA_n=X_n$, from Eq. (\ref{eq.rectree1}) we obtain the result.  	
\end{proof}

\begin{proposition}\label{prop.tree-comp}\normalfont{
The words coming from shift plethystic trees are completely  characterized by the following properties:\begin{enumerate} \item Its first letter is $X_0$.
	\item If $\ell(\om_T)>1$ it is  followed by a word of the form $X_{\kap}$, $\kap$ being a composition with first element equal to $1$ and risings at most  $1$, $\kap_{i+1}-\kap_{i}\leq 1$. 
\end{enumerate}}
\end{proposition}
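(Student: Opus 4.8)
The plan is to reinterpret the preorder word of an SP tree $T$ as the sequence of heights visited during its depth-first traversal. Writing $\om_T=X_{c_1}X_{c_2}\cdots X_{c_n}$, the letter $X_{c_i}$ records the height $c_i$ of the $i$-th vertex met in preorder. Under this dictionary the two stated properties translate into the three conditions $c_1=0$, $c_i\geq 1$ for every $i\geq 2$, and $c_{i+1}-c_i\leq 1$ for all $i$. I would then prove that $\om\in\msA$ if and only if its height sequence has these three features; injectivity of $T\mapsto\om_T$ is already granted in the text, so only the characterization of the image is at issue.

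For the necessity direction I would argue directly from the shape of a preorder walk. The root is the unique vertex of height $0$, so $c_1=0$, and every later vertex lies in a subtree and thus has height $\geq 1$; moreover the second vertex visited is the leftmost child of the root, forcing $c_2=1$. The crucial point is the bound on rises: in passing from the $i$-th to the $(i+1)$-th vertex, either the $i$-th vertex is internal, in which case its successor is its leftmost child and the height increases by exactly $1$, or the $i$-th vertex is a leaf, in which case the traversal backtracks to the next unvisited sibling of some ancestor and the height weakly decreases. In both cases $c_{i+1}-c_i\leq 1$.

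For sufficiency I would run an induction on $\ell(\om)$ using the recursion $\msA=X_0\frac{1}{1-\sigma\msA}$ of Eq.~(\ref{eq.rectree}), rather than reconstructing the tree by hand. Given a word $\om=X_0u$ with the stated properties and $\ell(\om)>1$, the constraints $c_2\geq 1$ and $c_2\leq c_1+1=1$ force $u$ to begin with $X_1$. I would cut $u$ into maximal blocks $u=u_1u_2\cdots u_k$ by splitting immediately before every occurrence of the letter $X_1$; each block $u_j$ then starts with $X_1$ and, since all its other letters are $\geq 1$ and different from $1$, has every remaining letter $\geq 2$. Applying $\sigma^{-1}$ to a block lowers its first letter to $X_0$ and its other letters to values $\geq 1$ while preserving the rise condition, so $\sigma^{-1}u_j$ again satisfies the three properties and is strictly shorter than $\om$. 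By the induction hypothesis $\sigma^{-1}u_j=\om_{T_j}$ for an SP tree $T_j$, whence $u_j=\sigma\om_{T_j}$ and, by Eq.~(\ref{eq.recpre}), $\om=X_0\,\sigma\om_{T_1}\cdots\sigma\om_{T_k}=\om_T$, where $T$ is the tree whose root carries the subtrees $T_1,\dots,T_k$. This places $\om$ in $\msA$ and closes the induction.

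The step I expect to be the main obstacle is justifying that this block decomposition is legitimate: one must see that the rise bound $c_{i+1}-c_i\leq 1$ is exactly what prevents the heights from leaping past a value, so that between two successive returns to height $1$ the word stays at heights $\geq 2$ and the shifted-down block $\sigma^{-1}u_j$ never dips to the forbidden height $0$. Equivalently, in the reconstruction language, the rise condition is the discrete intermediate-value statement that every non-root vertex of height $h$ is preceded in preorder by a vertex of height $h-1$, which is precisely what allows each block to be read as the shifted preorder word of a genuine SP subtree.
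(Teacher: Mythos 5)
Your proposal is correct, and it is essentially the paper's own approach carried out in full: the paper's proof is the one-line remark ``Easy from Eq.~(\ref{eq.recpre}), by induction on the number of vertices,'' and your induction on word length, with the block decomposition of the suffix at each occurrence of $X_1$ serving as the inverse of the concatenation in Eq.~(\ref{eq.recpre}), is precisely that induction made explicit. The detail you flag as the main obstacle is indeed the one point needing care, and you resolve it correctly: the rise bound forces the second letter of each block to be exactly $X_2$, so each shifted-down block again satisfies the hypotheses and the induction closes.
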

\begin{proof}
	Easy from Eq. (\ref{eq.recpre}), by induction on the number of vertices.
\end{proof}
\begin{definition}We denote by $\C$ the language of compositions, and  by $\C^{(1)}$ language of compositions with risings at most $1$. Observe that  $\sigma\C^{
(1)}$ consists of the compositions in $\C^{(1)}$, but where every part is at least $2$. More generally we define $\C^
{(m)}$ to be the language of compositions with rising at most $m$.\end{definition}
All the languages $\C$ and $\C^{(m)}$, $m\geq 1$, include the empty word.
	Proposition \ref{prop.tree-comp} can be now restated as follows, in terms of generating functions. \begin{proposition}\normalfont{The language $\msA$ can be expanded as
\begin{equation}\label{eq.splanguage}
\msA=X_0(1+\sum_{\kap\in\C^{(1)},\,\kap_1=1}X_{\kap}).
\end{equation}}
\end{proposition}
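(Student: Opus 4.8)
The plan is to read the claimed expansion directly off the combinatorial characterization in Proposition~\ref{prop.tree-comp}, viewing the identity as an equality of formal power series in $\K\la\la\X\ran\ran$. Recall that by definition $\msA=\sum_T\om_T$, the sum ranging over all SP trees, and that the assignment $T\mapsto\om_T$ is injective, since (as already noted) the tree $T$ is recovered from $\om_T$. Consequently each word in the image of this assignment occurs in $\msA$ with coefficient exactly one and every other word occurs with coefficient zero; thus $\msA$ equals the generating series of the set of preorder words, and it suffices to describe that set precisely.

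First I would split the trees according to the length of their preorder word. The unique SP tree consisting of a single root contributes the word $X_0$, by the first branch of Eq.~(\ref{eq.recpre}). For every tree with more than one vertex, Proposition~\ref{prop.tree-comp} gives $\om_T=X_0X_{\kap}$ where $\kap$ is a composition with $\kap_1=1$ and risings at most one, that is $\kap\in\C^{(1)}$ with $\kap_1=1$; conversely the completeness asserted in that proposition guarantees that each such $\kap$ is realized by exactly one tree. Hence the set of preorder words is exactly $\{X_0\}$ together with $\{X_0X_{\kap}:\kap\in\C^{(1)},\,\kap_1=1\}$, and summing over it yields
\begin{equation*}
\msA=X_0+\sum_{\kap\in\C^{(1)},\,\kap_1=1}X_0X_{\kap}.
\end{equation*}

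Finally I would factor the common left letter $X_0$ out of the sum. Since left multiplication by a fixed letter is continuous on $\K\la\la\X\ran\ran$ and commutes with the convergent series sum, this gives
\begin{equation*}
\msA=X_0\left(1+\sum_{\kap\in\C^{(1)},\,\kap_1=1}X_{\kap}\right),
\end{equation*}
which is the asserted formula.

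This argument carries no real obstacle, because Proposition~\ref{prop.tree-comp} already supplies the exact description of the words $\om_T$; the only points deserving care are the two halves of the underlying bijection---injectivity of $T\mapsto\om_T$, needed so that no word is overcounted, and the completeness of the characterization, needed so that every admissible $\kap$ genuinely appears---together with the harmless formal manipulation of pulling $X_0$ through an infinite sum.
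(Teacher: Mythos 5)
Your proof is correct and takes essentially the same route as the paper, which states this proposition without a separate proof, treating it as a direct restatement of Proposition~\ref{prop.tree-comp} in generating-function form. Your explicit attention to the two halves of the bijection (injectivity of $T\mapsto\om_T$, noted earlier in the paper, and the completeness of the characterization) simply spells out what the paper leaves implicit.
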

Define the alphabets
$$\X_+=\{X_1,X_2,X_3,\dots\}\mbox{ and }\X_{+2}=\{X_2,X_3,\dots\}.$$
\begin{definition}  We denote by $N$ the $\C^{(1)}$-module of compositions $\kap$ such that $\kap_1\geq 2$.
	\begin{equation}
N=\sum_{\kap\in\C^{(2)},\;\kap_1\geq 2}X_{\kap},
		\end{equation}
		\noindent 
The languages $\C^{(1)}$ and $\sigma\C^{(1)}$  are both linked. The language $\C^{(1)}\subset \X_+^*$ with set of links
\begin{equation*}
B=\{(X_i,X_j)|j-i\leq 1\}\subset \X_+\times\X_+, 
\end{equation*}
and $\sigma\C^{(1)}\subset\X_{2+}^*$ with the shifted set of links
\begin{equation*}
\sh B=\{(X_i,X_j)|j-i\leq 1,\; i,j\geq 2\}\subset \X_{2+}\times\X_{2+}.
\end{equation*}
The $\C^{(1)}$-module $N$ has as set of links $C\subset\X_2\times\X_+$,
\begin{equation*}
C=\{(X_i,X_j)|j-i\leq 1,\; i\geq 2\}\subset \X_{2+}\times\X_{+}.
\end{equation*}
\begin{definition}
	We denote by $\Lb_m$  the language of $m$-distinct partitions  (in the terminology of \cite{Andrews2004}). Being more explicit, $\Lb_2$ is the language of words of the form $X_{\lambda}$, $\lam$ being a partition (written in increasing order), $1\leq\lam_1<\lam_2<\lam_3,\dots$, satisfying \begin{equation}\label{eq.rest}\lambda_{i+1}-\lambda_i\geq m,\end{equation}
	(the empty and the singleton words being included in $\Lb_m$). In particular we have that $\Lb_0=\pa$ is the language of partitions with repetitions, $\Lb_1$, that of partitions without repetitions, and finally $\Lb_2$ is the language of $2$-distinct partitions, directly related to the combinatorics of the Rogers-Ramanujan identities. Observe that $\sigma\Lb_2$ is the language of $2$-distinct partitions, where each part is at least $2$.
\end{definition}
\begin{proposition}\normalfont{
We have that $\Lb_2$ is the $K$-dual of $\C^{(1)}$, $\sigma\Lb_2$ is the $K$- dual of $\sigma\C^{(1)}$. The $K$-dual of $N$ is the $\Lb_2$-module $\Nb$ of $2$-distinct partitions with first part greater than $2$, 
 \begin{eqnarray*}
\Lb_2&=&(\C^{(1)})^!\\
\sigma\Lb_2&=& (\sigma\C^{(1)})^!\\
\Nb&=&N^!
\end{eqnarray*}  }
\end{proposition}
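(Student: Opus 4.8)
The plan is to read off each $K$-dual directly from its definition, which only asks us to complement the underlying set of links; no appeal to Propositions \ref{prop.kdual1} or \ref{prop.kdual2} is needed for these language-level identities, although those propositions then convert them into the corresponding inversion formulas for the graded generating functions. So the whole proof reduces to computing three complement link sets and recognizing the resulting linked languages and module.

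First I would treat $\C^{(1)}=1+\X_++L_B$ with $B=\{(X_i,X_j)\mid j-i\le 1\}$ formed inside $\X_+\times\X_+$. Its complement is $B^c=\{(X_i,X_j)\mid j-i\ge 2\}$, so by definition $(\C^{(1)})^!=1+\X_++L_{B^c}$. A word $X_{\kap}$ of length $\ge 2$ lies in $L_{B^c}$ exactly when $\kap_{i+1}-\kap_i\ge 2$ for all $i$; this condition already forces the parts to be strictly increasing with gaps at least $2$, which is precisely the defining property of a $2$-distinct partition. Adjoining the empty word and the singletons $\X_+$ then yields exactly $\Lb_2$. The identity $\sigma\Lb_2=(\sigma\C^{(1)})^!$ is the same computation carried out inside $\X_{+2}\times\X_{+2}$: complementing $\sigma B=\{(X_i,X_j)\mid j-i\le 1,\ i,j\ge 2\}$ gives the $2$-distinct partitions all of whose parts are at least $2$, that is $\sigma\Lb_2$.

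For the module I would invoke the definition $N^!=\A_1+L_{C^c,B^c}$ with $\A_1=\X_{+2}$ and $C=\{(X_i,X_j)\mid j-i\le 1,\ i\ge 2\}$, the complement of $C$ being taken inside $\A_1\times\A=\X_{+2}\times\X_+$, so that $C^c=\{(X_i,X_j)\mid i\ge 2,\ j-i\ge 2\}$. Hence a word of length $\ge 2$ in $L_{C^c,B^c}$ begins with a letter $X_i$ with $i\ge 2$, has its first gap at least $2$ by $C^c$, and all subsequent gaps at least $2$ by $B^c$; together with the singletons in $\A_1$ these are exactly the $2$-distinct partitions whose smallest part is at least $2$, which is the $\Lb_2$-module $\Nb$.

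The proof is therefore entirely bookkeeping, and the only point demanding care is that the three complements are formed inside three \emph{different} product alphabets ($\X_+\times\X_+$, $\X_{+2}\times\X_{+2}$ and $\X_{+2}\times\X_+$), so that the constraint $i\ge 2$ is or is not imposed on the correct coordinate; one must also verify that the empty word and the length-one words land on the intended side in each case. I would additionally confirm that the first-part condition forced by $\A_1=\X_{+2}$ is $\kap_1\ge 2$, matching the ``each part greater than $1$'' side of (\ref{eq.RR1}), and read the stated first-part condition on $\Nb$ in that sense.
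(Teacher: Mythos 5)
Your proposal is correct and takes essentially the same approach as the paper, whose entire proof is the remark ``Easy, by simple inspection'': you simply carry out that inspection explicitly, complementing the three link sets inside the correct product alphabets $\X_+\times\X_+$, $\X_{+2}\times\X_{+2}$ and $\X_{+2}\times\X_+$ and identifying the resulting languages. Your closing observation is also the right reading: the condition forced on $\Nb$ is $\kap_1\geq 2$ (consistent with the paper's later identity $\Nb=\sh\Lb_2-1$), so the statement's phrase ``first part greater than $2$'' is an off-by-one slip for ``greater than $1$.''
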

\begin{proof} Easy, by simple inspection.	
\end{proof}
	Observe that since $\lam$ is a partition, if $\lam_1>1$, the rest of parts have also to be  greater than $1$, and the series $\Nb$ equals the non constant part of $\sh\Lb_2$,
	
	$$\Nb=\sh\Lb_2-1.$$
	Their graded generating functions are related as follows 
	\begin{equation}\label{Eq.graded}\Lb_2^g=1+\sum_{\lam_{i+1}-\lam_{i}\geq 2}(-1)^{\ell(\lam)}X_{\lam}=1-\sum_{\lam_{i+1}-\lam_{i}\geq 2}(-1)^{\ell(\lam)-1}X_{\lam}=1-\Nb^g.\end{equation}
\end{definition}

\begin{theorem}\label{th.quotient}
	The language $\msA$ can be expressed as the product
	\begin{equation}\label{eq.plethyRR}\msA= X_0(\sh\Lb_2^g)(\Lb_2^g)^{-1}.
	\end{equation}

\end{theorem}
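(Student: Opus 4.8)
The plan is to reduce the identity to a single combinatorial factorization of the language $\C^{(1)}$ and then read off the result from the duality of Proposition \ref{prop.kdual1}. First I would record two consequences of that proposition. Since applying the $K$-dual construction twice complements the link set twice and hence returns the original linked language, we have $(\Lb_2)^!=\C^{(1)}$ and $(\sh\Lb_2)^!=\sh\C^{(1)}$. Proposition \ref{prop.kdual1} applied to the linked languages $\Lb_2$ and $\sh\Lb_2$ then gives
\begin{equation*}
\Lb_2^g=(\C^{(1)})^{-1},\qquad \sh\Lb_2^g=(\sh\C^{(1)})^{-1}.
\end{equation*}
Combining this with the expansion $\msA=X_0\bigl(1+\sum_{\kap\in\C^{(1)},\,\kap_1=1}X_{\kap}\bigr)$ from Eq. (\ref{eq.splanguage}), the target formula $\msA=X_0(\sh\Lb_2^g)(\Lb_2^g)^{-1}$ becomes equivalent, after cancelling $X_0$ and left-multiplying by $\sh\C^{(1)}$ (invertible, being of constant term $1$), to the language identity
\begin{equation*}
\C^{(1)}=(\sh\C^{(1)})\Bigl(1+\sum_{\kap\in\C^{(1)},\,\kap_1=1}X_{\kap}\Bigr).
\end{equation*}

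Next I would prove this factorization by a unique-decomposition argument. Recall that $\sh\C^{(1)}$ is the language of compositions of risings at most $1$ whose every part is at least $2$, while the second factor collects the empty word together with the compositions of risings at most $1$ whose first part equals $1$. Given $\kap\in\C^{(1)}$, I would split it at the position of its first part equal to $1$: write $\mu$ for the (possibly empty) prefix preceding that position and $\nu$ for the remaining suffix; then $\mu$ has all parts $\geq 2$ and $\nu$ begins with $1$, both inheriting risings at most $1$, and if $\kap$ has no part equal to $1$ we take $\nu$ empty and $\mu=\kap$. This produces a pair in $(\sh\C^{(1)})\times(\{1\}\cup\{X_{\kap}:\kap_1=1\})$. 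Conversely, any concatenation $\mu\nu$ of such factors lies in $\C^{(1)}$, because the only new adjacency is the seam, which passes from a part $\geq 2$ of $\mu$ down to the first part $1$ of $\nu$, a drop and hence a rising $\le 1$. The two maps are mutually inverse, so the factorization holds as an identity of noncommutative series.

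Finally I would assemble the pieces: substituting $1+\sum_{\kap\in\C^{(1)},\,\kap_1=1}X_{\kap}=(\sh\C^{(1)})^{-1}\C^{(1)}=(\sh\Lb_2^g)(\Lb_2^g)^{-1}$ into the expansion of $\msA$ yields exactly Eq. (\ref{eq.plethyRR}). The only genuinely delicate step is the factorization: one must verify both that choosing the split point at the first part equal to $1$ forces the prefix into $\sh\C^{(1)}$ and the suffix into the first-part-$1$ language, and that every recombination automatically respects the rising constraint at the seam, which is precisely what upgrades the decomposition from injective to bijective and makes the series identity exact.
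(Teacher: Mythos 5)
Your proof is correct, but it takes a genuinely different route from the paper's. The paper writes $\msA=X_0(\C^{(1)}-N)$, where $N$ is the linked $\C^{(1)}$-module of compositions in $\C^{(1)}$ with first part at least $2$, and then uses \emph{both} duality results: Proposition \ref{prop.kdual1} to get $\C^{(1)}=(\Lb_2^g)^{-1}$ and the module duality of Proposition \ref{prop.kdual2} to get $N=\Nb^g(\Lb_2^g)^{-1}$, so that $\C^{(1)}-N=(1-\Nb^g)(\Lb_2^g)^{-1}=(\sh\Lb_2^g)(\Lb_2^g)^{-1}$ via Eq. (\ref{Eq.graded}). You bypass Proposition \ref{prop.kdual2} and the module $N$ altogether: using only Proposition \ref{prop.kdual1} and the involutivity of the $K$-dual, you convert the target into the dual form $\msA=X_0(\sh\C^{(1)})^{-1}\C^{(1)}$ and then establish the language factorization $\C^{(1)}=(\sh\C^{(1)})\bigl(1+\sum_{\kap\in\C^{(1)},\,\kap_1=1}X_{\kap}\bigr)$ by splitting each word at its first part equal to $1$; your verification of both directions is sound (the prefix has no part $1$, hence all parts $\geq 2$; the seam goes from a part $\geq 2$ down to $1$, so the rising constraint survives recombination; and the split point is recoverable, so the factorization is unique and the series identity is exact). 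In effect you prove the paper's subsequent corollary, Eq. (\ref{eq.arbRR2}), first and deduce the theorem from it — the reverse of the paper's logical order. What each approach buys: the paper's argument is uniform with its Koszul-duality framework, where the same sign-reversing-involution machinery handles any linked module, at the cost of invoking the heavier Proposition \ref{prop.kdual2}; yours is more elementary and self-contained, replacing the signed involution for the module by a transparent unique-factorization bijection on $\C^{(1)}$, which also makes the combinatorial meaning of the dual form explicit.
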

\begin{proof} By Eq. (\ref{eq.splanguage}) we have
\begin{equation*}	
\msA=X_0(\C^{(1)}-N).
\end{equation*}
Since the operation of taking duals is involutive, $\Lb_2^!=\C^{(1)}$, $(\sh\Lb_2)^!=\sh\C^{(1)}$, and $\Nb^!=N$. By Eq. (\ref{Eq.graded}), Prop. \ref{prop.kdual2} and Prop. \ref{prop.kdual1},
$$\C^{(1)}-N=(\Lb^g_2)^{-1}-\Nb^g(\Lb^g_2)^{-1}=(1-\Nb^g)(\Lb_2^g)^{-1}=(\sh\Lb_2^g)(\Lb^g_2)^{-1}.$$

\end{proof}
Eq. (\ref{eq.plethyRR}) can be written more explicitly as the product
\begin{equation}\label{eq.arbRR1}
\msA=X_0(\sh\Lb_2^g)(\Lb_2^g)^{-1}=X_0(1+\sum_{\lam_{i+1}-\lam_i\geq 2,\,\lam_1\geq 2}(-1)^{\ell(\lam)}X_{\lam})(1+\sum_{\lam_{i+1}-\lam_i\geq 2,\, \lam_1\geq 1}(-1)^{\ell(\lam)}X_{\lam})^{-1}.
\end{equation}
Since $(\C^{(1)})^{-1}=\Lb_2^g$, and  $(\sh\C^{(1)})^{-1}=\sh\Lb_2^g$, Theorem \ref{th.quotient} has the following dual form
\begin{corollary}The language $\msA$ can be written as the product
	\begin{equation}\label{eq.arbRR2}
	\msA=X_0 (\sh \C^{(1)})^{-1}\C^{(1)}=X_0(1+\sum_{\kap_{i+1}-\kap_i\leq 1}X_{\kap})^{-1}(1+\sum_{\kap_{i+1}-\kap_i\leq 1,\, \kap_i\geq 2}X_{\kap}).
	\end{equation}
	\end{corollary}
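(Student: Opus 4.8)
The plan is to obtain this corollary as a formal consequence of Theorem \ref{th.quotient}, simply by dualizing each of its two factors. Recall that the theorem gives the factorization $\msA = X_0(\sh\Lb_2^g)(\Lb_2^g)^{-1}$, so it suffices to re-express the graded series $\sh\Lb_2^g$ and $(\Lb_2^g)^{-1}$ in terms of the unsigned composition languages $\sh\C^{(1)}$ and $\C^{(1)}$. The two ingredients that accomplish this are exactly the identities $(\C^{(1)})^{-1}=\Lb_2^g$ and $(\sh\C^{(1)})^{-1}=\sh\Lb_2^g$ recorded just before the statement, which I would first justify.

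First I would invoke Proposition \ref{prop.kdual1}, which asserts $L^!=(L^g)^{-1}$ for every linked language $L$. Applying it to the linked language $\Lb_2$, whose K-dual is $\C^{(1)}$, gives $\C^{(1)}=\Lb_2^!=(\Lb_2^g)^{-1}$; inverting both sides (legitimate since each has constant term $1$) yields the two equivalent forms $(\C^{(1)})^{-1}=\Lb_2^g$ and $(\Lb_2^g)^{-1}=\C^{(1)}$. The same argument applied to the linked language $\sh\Lb_2$, whose K-dual is $\sh\C^{(1)}$, produces $(\sh\C^{(1)})^{-1}=\sh\Lb_2^g$. Here I rely on the involutivity of the K-dual (used already in the proof of Theorem \ref{th.quotient}) to pass between $\Lb_2=(\C^{(1)})^!$ and $\C^{(1)}=\Lb_2^!$.

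Substituting $\sh\Lb_2^g=(\sh\C^{(1)})^{-1}$ and $(\Lb_2^g)^{-1}=\C^{(1)}$ into the factorization of Theorem \ref{th.quotient} gives $\msA=X_0(\sh\C^{(1)})^{-1}\C^{(1)}$ at once. The explicit second equality then follows by expanding the two languages from their definitions: $\C^{(1)}=1+\sum_{\kap_{i+1}-\kap_i\leq 1}X_{\kap}$, while $\sh\C^{(1)}=1+\sum_{\kap_{i+1}-\kap_i\leq 1,\,\kap_i\geq 2}X_{\kap}$ collects those compositions of risings at most $1$ all of whose parts are at least $2$, and it is this latter series whose inverse occupies the first factor.

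Because the statement is purely formal, I expect no serious obstacle; the one point that genuinely warrants care is that the shift $\sh$ commutes with inversion, so that $(\sh\C^{(1)})^{-1}$ is unambiguous and equals $\sh\bigl((\C^{(1)})^{-1}\bigr)=\sh\Lb_2^g$. This holds because $\sh$ is a continuous algebra endomorphism of $\K\la\la\X\ran\ran$, hence commutes with every partial sum $\sum_{n=0}^{N}(1-R)^n$ and, by continuity, with the limit defining $R^{-1}$. I would also flag two further bookkeeping points: the two factors must be kept in the stated order, since $\K\la\la\X\ran\ran$ is noncommutative, and the shift preserves word length, so $(\sh\Lb_2)^g=\sh(\Lb_2^g)$, which is what legitimizes writing $\sh\Lb_2^g$ without ambiguity.
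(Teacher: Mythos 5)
Your proposal is correct and follows essentially the same route as the paper, which obtains the corollary in one line by substituting $(\sh\C^{(1)})^{-1}=\sh\Lb_2^g$ and $(\Lb_2^g)^{-1}=\C^{(1)}$ (consequences of Proposition \ref{prop.kdual1} together with the involutivity of the K-dual, exactly as you argue) into the factorization $\msA=X_0(\sh\Lb_2^g)(\Lb_2^g)^{-1}$ of Theorem \ref{th.quotient}; your added remarks on $\sh$ commuting with inversion and on the order of the noncommutative factors are sound. Note also that your placement of the restriction $\kap_i\geq 2$ on the \emph{inverted} factor is the correct one, and it silently fixes a typo in the displayed equation of the statement, where the two constraint sets appear swapped relative to the symbolic expression $X_0(\sh\C^{(1)})^{-1}\C^{(1)}$.
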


\section{Path length and $q$-series}
For a series $S$ on the alphabet $\X$, making the \textcolor{blue}{ $q$-umbral evaluation $X_k\rightarrow zq^k$} in the pair of commuting variables $z$ and $q$, we obtain a $q$-series that by abuse of language we denote with the same symbol $S$, $S(z,q)$. Observe that for every series $S$ in $\K\la \la\X\ran\ran$ we have \begin{equation}\label{eq.shiftq}(\sigma S)(z,q)=S(zq,q).\end{equation} Recall that the \textcolor{blue}{\em{path length}} of a rooted tree is defined to be the sum of the heights of its vertices. When we make the substitution $X_k\rightarrow zq^k$ in the word $\om_T$ associated to a tree $T$ we get $z^n q^{\mathrm{pl(T)}}$, where $n$ is the number of vertices of $T$ and $\mathrm{pl}(T)$ its path length. For example, in the tree of Fig. \ref{fig.plethystictree}, the $q$-substitution in the word $X_0X_1X_2X_2X_1X_2X_2X_2X_3$ gives us
$$X_0X_1X_2X_2X_1X_2X_2X_2X_3\mapsto z(zq)(zq^2)^2(zq)(zq^2)^3(zq^3)=z^9q^{15}.$$

 Then, the $q$-series $\msA(z,q)$ counts the number of plane rooted trees according with their path length. Observe that the path length of a plane rooted tree with $n$ vertices is bounded by the path length of the branch-less tree, which is equal to $0+1+2+3+\dots +n-1=\binom{n}{2}$.  From Eq. (\ref{eq.rectree1}) we get  
	\begin{equation}
	\msA(z,q)=\sum_{n=1}^{\infty}(\sum_{m=0}^{\binom{n}{2}} P(n,m)q^m)z^n=\cfrac{z}{1-\cfrac{zq}{1-\cfrac{zq^2}{\ddots}}},
	\end{equation}
\noindent where $P(n,m)$ is the number of plane rooted tree on $n$ vertices having path length equal to $m$, 
\begin{eqnarray*}\msA(z,q)&=&z + q z^2 + (q^2 + q^3) z^3+ (q^3 + 2 q^4 + q^5 + q^6) z^4\\ &+& (q^4 + 
	3 q^5 + 3 q^6 + 3 q^7 + 2 q^8 + q^9 + q^{10}) z^5\\ &+& (q^5 + 4 q^6 + 
	6 q^7 + 7 q^8 + 7 q^9 + 5 q^{10} + 5 q^{11} + 3 q^{12} + 2 q^{13} + q^{14} +
	q^{15}) z^6+\dots.\end{eqnarray*}

From Eq. (\ref{eq.arbRR1}),
\begin{equation}\label{eq.arbrr1}
\msA(z,q)=z\frac{ \Lb_2(zq,z)}{\Lb_2(z,q)}=z\frac{1+\sum_{\lam\in\sh\Lb_2}(-z)^{\ell(\lam)}q^ {|\lam|}}{1+\sum_{\lam\in\Lb_2}(-z)^{\ell(\lam)}q^{|\lam|}}.
\end{equation}
From Eq. (\ref{eq.arbRR2}) we obtain the dual expression
\begin{equation}	
\msA(z,q)=z\frac{\C^{(1)}(z,q)}{\C^{(1)}(zq,q)}=z\frac{1+\sum_{\kap \in\C^{(1)}}z^{\ell(\kap)}q^{|\kap|}}{1+\sum_{\kap \in\sh\C^{(1)}}z^{\ell(\kap)}q^{|\kap|}}. \end{equation}

  \section{Rogers-Ramanujan Identities and Compositions}\label{sec.RRcompositions}
  \begin{theorem}\label{th.RRcompositions}
  	We have the following identities
  	\begin{eqnarray}
  (\C^{(1)})(-1,q)=	(\C^{(1)})^{g}(1,q)=1+\sum_{\kap\in\C^{(-1)}}(-1)^{\ell(\kap)}q^{|\kap|}&=&\prod_{k=0}^{\infty}(1-q^{5k+1})(1-q^{5k+4})\\ (\sh\C^{(1)})(-1,q)=(\sh\C^{(1)})^g(1,q)=1+\sum_{\kap\in\C_2}(-1)^{\ell(\kap)}q^{|\kap|}&=&\prod_{k=0}^{\infty}(1-q^{5k+2})(1-q^{5k+3})
  	\end{eqnarray}
  	\end{theorem}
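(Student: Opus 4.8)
The plan is to deduce both identities from the classical Rogers--Ramanujan identities (\ref{eq.RR2}) and (\ref{eq.RR1}) by transporting the inversion relations $(\C^{(1)})^{-1}=\Lb_2^g$ and $(\sh\C^{(1)})^{-1}=\sh\Lb_2^g$ through the $q$-umbral evaluation. The first two equalities in each displayed line are formal: unravelling the definitions, both $\C^{(1)}(-1,q)$ and $(\C^{(1)})^g(1,q)$ equal $1+\sum_{\kap\in\C^{(1)}}(-1)^{\ell(\kap)}q^{|\kap|}$, because the graded superscript $g$ inserts precisely the sign $(-1)^{\ell(\kap)}$ that the substitution $z=-1$ produces; the middle member merely renames the class of signed compositions being summed, and likewise for the shifted language in the second line. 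The real content is thus the final product equality in each line.

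The key structural observation is that the $q$-umbral evaluation $X_k\mapsto zq^k$ is a continuous algebra homomorphism from $\K\la\la\X\ran\ran$ into $\K[[z,q]]$, since $\om\mapsto z^{\ell(\om)}q^{|\om|}$ respects concatenation; hence it sends the series inverse of an invertible element to the reciprocal of its image. Applying it to $(\C^{(1)})^{-1}=\Lb_2^g$ gives $\C^{(1)}(z,q)\,\Lb_2^g(z,q)=1$. The decisive step is the specialization $z=-1$: since $\Lb_2^g$ already carries the factor $(-1)^{\ell(\lam)}$, this substitution makes every coefficient positive,
\begin{equation*}
\Lb_2^g(-1,q)=1+\sum_{\lam\in\Lb_2}\bigl(-(-1)\bigr)^{\ell(\lam)}q^{|\lam|}=\sum_{\lam\in\Lb_2}q^{|\lam|}=\Lb_2(1,q).
\end{equation*}
Now $\Lb_2(1,q)$ is the generating function of $2$-distinct partitions, which is exactly the left-hand side of (\ref{eq.RR2}); taking reciprocals there yields $\C^{(1)}(-1,q)=\Lb_2(1,q)^{-1}=\prod_{k\geq 0}(1-q^{5k+1})(1-q^{5k+4})$.

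For the second identity I would run the same argument on $(\sh\C^{(1)})^{-1}=\sh\Lb_2^g$, now using the shift rule (\ref{eq.shiftq}), $(\sigma S)(z,q)=S(zq,q)$. Evaluating at $z=-1$ gives $\sh\Lb_2^g(-1,q)=\Lb_2^g(-q,q)=\sum_{\lam\in\Lb_2}q^{\ell(\lam)+|\lam|}$, and the bijection $\lam\mapsto\lam+(1^{\ell(\lam)})$ (add one to every part) identifies this with $(\sigma\Lb_2)(1,q)$, the generating function of $2$-distinct partitions all of whose parts are at least $2$, that is, the left-hand side of (\ref{eq.RR1}). Inverting (\ref{eq.RR1}) then produces $\prod_{k\geq 0}(1-q^{5k+2})(1-q^{5k+3})$.

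The computation is short once the inversion relations are in hand, and the only delicate point is the sign bookkeeping at $z=-1$, where the pre-existing $(-1)^{\ell(\lam)}$ of the graded series must cancel exactly so that the signed partition sum collapses to the genuine Rogers--Ramanujan sum. The substantive mathematics is carried entirely by the classical identities (\ref{eq.RR2}) and (\ref{eq.RR1}), which I would invoke rather than reprove. Indeed, exhibiting a direct sign-reversing involution that cancels the signed compositions down to the product side would constitute a new combinatorial proof of Rogers--Ramanujan, which is precisely the open problem flagged in the introduction; so that route is the genuine obstacle, and I would deliberately avoid it.
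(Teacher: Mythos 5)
Your proposal is correct and takes essentially the same route as the paper: both rest on the K-duality of Proposition \ref{prop.kdual1} making $\C^{(1)}$ and $\Lb_2$ (and their shifts) mutual inverses up to grading, transport this through the $q$-umbral evaluation, and then invoke the classical identities (\ref{eq.RR2}) and (\ref{eq.RR1}) together with the standard sums $\sum_{n\geq 0} z^nq^{n^2}/\bigl((1-q)\cdots(1-q^n)\bigr)$ and $\sum_{n\geq 0} z^nq^{n(n+1)}/\bigl((1-q)\cdots(1-q^n)\bigr)$. The only cosmetic difference is that you work from $(\C^{(1)})^{-1}=\Lb_2^g$ and specialize at $z=-1$, tracking the sign cancellation (plus the add-one-to-each-part bijection in the shifted case), whereas the paper works from the mirror relation $(\C^{(1)})^g=\Lb_2^{-1}$ at $z=1$ --- interchangeable in view of the theorem's own first equality $(\C^{(1)})(-1,q)=(\C^{(1)})^{g}(1,q)$.
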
\begin{proof}
  From Proposition \ref{prop.kdual1}, $(\C^{(1)})^g=(\pa_2)^{-1}$ and $(\sh\C^{(1)})^g=(\sf\pa_2)^{-1}$. Then, $q$-umbral evaluation gives us 
  \begin{eqnarray*} (\C^{(1)})^{g}(z,q)=\sum_{\kap\in\C^{(1)}}(-1)^{\ell(\kap)}z^{\ell(\kap)}q^{|\kap|}&=&\frac{1}{1+\sum_{\lam\in\pa_2} z^{\ell(\lam)}q^{|\lam|}}\\
  	(\sh\C^{(1)})^{g}(z,q)=\sum_{\kap\in\sh\C^{(1)}}(-1)^{\ell(\kap)}z^{\ell(\kap)}q^{|\kap|}&=&\frac{1}{\sum_{\lam\in\sh\pa_2} z^{\ell(\lam)}q^{|\lam|}}\\
  	  \end{eqnarray*}
By the well known identities
\begin{eqnarray*}
\sum_{\lam\in\pa_2} z^{\ell(\lam)}q^{|\lam|}&=&\sum_{n=0}^{\infty}\frac{z^n q^{n^2}}{(1-q)(1-q^2)\dots (1-q^n)}\\
\sum_{\lam\in\sh\pa_2} z^{\ell(\lam)}q^{|\lam|}&=&\sum_{n=0}^{\infty}\frac{z^n q^{n(n+1)}}{(1-q)(1-q^2)\dots (1-q^n)},
\end{eqnarray*}
using the Rogers-Ramanujan identities (Eq. (\ref{eq.RR1}) and Eq. (\ref{eq.RR2})) we get the result.
 \end{proof} 
Let $\C^{(1)}[n]$ and $\C^{(1)}[n,k]$ respectively be the set of compositions of $n$ in $\C^{(1)}$, and the set of composititions of $n$ in $\C^{(1)}$  having exactly $k$ parts. Similarly, define $\sh\C^{(1)}[n]$ and $\sh\C^{(1)}[n,k]$. From Theorem \ref{th.RRcompositions}, we get the identities
\begin{eqnarray}\label{eq.signedpartitions1} 1+\sum_{n=1}^{\infty} (\sum_{k=1}^{n}(-1)^k |\C^{(1)}[n,k]| ) q^n&=&\prod_{k=0}^{\infty}(1-q^{5k+1})(1-q^{5k+4})\}\\\label{eq.signedpartitions2}1+\sum_{n=2}^{\infty} (\sum_{k=1}^{n}(-1)^k |\sh\C^{(1)}[n,k]| ) q^n&=&\prod_{k=0}^{\infty}(1-q^{5k+2})(1-q^{5k+3})
\end{eqnarray}
Observe that the series in the right hand side of equation (\ref{eq.signedpartitions1}) gives us the partitions (in decreasing order) with distinct parts congruent with $1$ or $4$ module $5$, signed by its number of parts. The right hand side of (\ref{eq.signedpartitions2}) enumerates a similar kind of signed partitions, each part congruent with $2$ or $3$ module $5$. 
 For example $4+1$ is the only partition of $5$ enumerated by the right hand side of Eq. (\ref{eq.signedpartitions1}), and $\{7+3,\; 8+2\}$  are the only partitions of $10$ enumerated by the right hand side of Eq. (\ref{eq.signedpartitions2}). 
The compositions in $\sh\C^{(1)}(10)$ and in $\sh\C^{(1)}(11)$ are given respectively in tables \ref{tab.comp1} and \ref{tab.comp2}.
\begin{table}\begin{center}
\begin{tabular}{|l|l|l|l|l|l|l|l|c|}
	\hline 
$k$	&  &  &  &  &  &  &  & weight  \\ 
	\hline 
1	&10  &  &  &  &  &  &  & \textcolor{red}{-1} \\ 
	\hline 
2	& $55$ & $64$ &$\textcolor{blue}{73}$  &$\textcolor{blue}{82}$  &&&&\textcolor{blue}{2}  \\ 
	\hline 
3	&$532$  &$523$&$622$  &$442$  &$433$  &343 &334  &\textcolor{red}{-7}  \\ 
	\hline 
4	&$2233$  & $2323$ &$3223$  &$2332$  &$3232$  & $3322$ &$4222$ & \textcolor{blue}{7}  \\ 
	\hline 
5	& $22222$ &  &&  &&  && \textcolor{red}{-1} \\ 
	\hline
\end{tabular}\caption{Compositions in $\sh\C^{(1)}[10]$}\label{tab.comp1}\end{center}\end{table}

\begin{table}\begin{center}
\begin{tabular}{|l|l|l|l|l|l|l|l|c|}
	\hline 
	$k$	&  &  &  &  &  &  &  & weight  \\ 
	\hline 
	1	&11  &  &  &  &  &  &  & \textcolor{red}{-1} \\ 
	\hline 
	2	& $56$ & $65$ &74  &$\textcolor{blue}{83}$  &92&&&\textcolor{blue}{4}  \\ 
	\hline 
	
	3	&$722$&$632$&$623$&$542$ &$533$ &$452$&$443$&  \textcolor{red}{-9}\\  
	\hline &$434$&$344$&&&&&&\\\hline   
	4&$522$&$4322$&$4232$&$4223$&$3422$&$3332$&$3323$&\textcolor{blue}{11}\\\hline&$3233$&$2333$&$2342$&$2234$   
	&&&&\\\hline
	5	&$32222$&  $23222$ &$22322$  &$22232$&$22223$ &&& \textcolor{red}{-5} \\ 
	\hline 
\end{tabular}\caption{Compositions in $\sh\C^{(1)}[11]$}\label{tab.comp2}\end{center}\end{table}
Consider the set $\widehat{\C^{(1)}}[n,k]$ of restricted compositions. This set only excludes from $\C^{(1)}[n,k]$ the strictly decreasing compositions with all its parts congruent with $1$ or $4$ module $5$.  In a similar way, we define $\widehat{\sh\C^{(1)}}[n,k]$. 

 Then, the Rogers-Ramanujan identities have the following combinatorial form in terms of compositions.
\begin{theorem}The signed sets $\widehat{\C^{(1)}}[n]$ and $\widehat{\sh\C^{(1)}}[n]$ have both zero total weight 
	\begin{eqnarray}
	&&\sum_{k=1}^n(-1)^k|\widehat{\C^{(1)}}[n,k]|=0\mbox{, for $n\geq 1$}\\
	&&\sum_{k=1}^n(-1)^k |\widehat{\sh\C^{(1)}}(n,k)|=0\mbox{, for $n\geq 2$}.
	\end{eqnarray}
\end{theorem}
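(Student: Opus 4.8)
The plan is to read both identities off directly from Theorem \ref{th.RRcompositions} together with the combinatorial meaning of the two product sides, so that the restricted signed sums are obtained simply by subtracting off exactly the terms that the products enumerate. The argument is entirely bookkeeping once Theorem \ref{th.RRcompositions} is available.

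First I would recall, from equations (\ref{eq.signedpartitions1}) and (\ref{eq.signedpartitions2}), that for $n\geq 1$
$$\sum_{k=1}^n (-1)^k |\C^{(1)}[n,k]| = [q^n]\prod_{j=0}^\infty (1-q^{5j+1})(1-q^{5j+4}),$$
and analogously for $\sh\C^{(1)}$ with the product $\prod_{j=0}^\infty(1-q^{5j+2})(1-q^{5j+3})$. The content here is that the full (unrestricted) signed sum is already pinned down to the coefficient extracted from the corresponding product.

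Next I would interpret each product side combinatorially. Expanding $\prod_{j\geq 0}(1-q^{5j+1})(1-q^{5j+4})$, every factor $1-q^m$ with $m\equiv 1,4 \pmod 5$ contributes either $1$ or $-q^m$, so $[q^n]$ of the product equals $\sum_k(-1)^k$ times the number of ways of writing $n$ as a sum of $k$ distinct parts each $\equiv 1$ or $4 \pmod 5$. Writing such a set of distinct parts in strictly decreasing order produces precisely a strictly decreasing composition of $n$ with all parts $\equiv 1,4 \pmod 5$, with the number of parts $k$ carrying the sign $(-1)^k$. Since any strictly decreasing sequence has all risings $\leq 1$, these compositions all lie in $\C^{(1)}$, and they are exactly the ones that the definition of $\widehat{\C^{(1)}}[n,k]$ removes from $\C^{(1)}[n,k]$.

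Then the result falls out by a one-line subtraction. Writing $|\C^{(1)}[n,k]| = |\widehat{\C^{(1)}}[n,k]| + r(n,k)$, where $r(n,k)$ counts the removed compositions with $k$ parts, the signed sum splits as $\sum_k(-1)^k|\C^{(1)}[n,k]| = \sum_k(-1)^k|\widehat{\C^{(1)}}[n,k]| + \sum_k(-1)^k r(n,k)$; by the previous paragraph $\sum_k(-1)^k r(n,k)$ is exactly $[q^n]$ of the product, which by Theorem \ref{th.RRcompositions} also equals $\sum_k(-1)^k|\C^{(1)}[n,k]|$, forcing $\sum_k(-1)^k|\widehat{\C^{(1)}}[n,k]|=0$. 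The second identity is identical after replacing $5j+1,5j+4$ by $5j+2,5j+3$ and $\C^{(1)}$ by $\sh\C^{(1)}$, where one checks in addition that the smallest admissible part, namely $2$, respects the constraint $\kap_1\geq 2$ defining $\sh\C^{(1)}$. I do not expect a genuine obstacle; the only points needing care are confirming that the sign attached to each removed composition ($(-1)^{\text{number of parts}}$) matches the sign it receives in the product expansion, and that the removed strictly decreasing compositions really do lie in $\C^{(1)}$ (resp.\ $\sh\C^{(1)}$), both of which are immediate.
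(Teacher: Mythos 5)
Your proposal is correct and is essentially the paper's own argument: the paper proves this theorem implicitly by observing that the right-hand sides of equations (\ref{eq.signedpartitions1}) and (\ref{eq.signedpartitions2}) enumerate, signed by number of parts, exactly the strictly decreasing compositions with parts congruent to $1,4$ (resp.\ $2,3$) modulo $5$ that the hatted sets exclude, so the restricted signed sums vanish by subtraction. Your additional checks (sign matching under $(-1)^k$, and that the excluded compositions lie in $\C^{(1)}$, resp.\ $\sh\C^{(1)}$) are precisely the points the paper takes as immediate.
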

 \section{Shift plethysm, and general shift-plethystic trees}\label{sec.splety}
 \begin{definition} Let $R$ be a series in $\K\laa \X\raa$ with  zero constant term, $\la R,1\ran=0$.
 	We define the  \textcolor{blue}{{\em shift-plethystic substitution}} of $R$ in a word $X_\kap=X_{\kap_1}X_{\kap_2}X_{\kap_3}\dots X_{\kap_l}$, as the substitution of the shift $\sigma^{\kap_i}R$ on each of the letters of $X_{\kap}$,
 	\begin{equation*}
 	X_{\kap}\spl R:=(\sigma^{\kap_1}R)(\sigma^{\kap_2}R)\dots(\sigma^{\kap_l}R).
 	\end{equation*}
 	For a formal power series $T$, define the shift plethysm $T\spl R$ by 
 	\begin{equation}\label{eq.shiftplethys}
 	T\spl R=\sum_{\kap\in\N^*}\la T,X_{\kap}\rangle X_{\kap}\spl R=\sum_{\kap\in\N^*}\la T,X_{\kap}\rangle  (\sigma^{\kap_1}R)(\sigma^{\kap_2}R)\dots(\sigma^{\kap_l}R).
 	\end{equation}
 \end{definition}
The series in the right hand side of Eq. (\ref{eq.shiftplethys}) is convergent. To see this, let us denote by $R^{\kap}$ the product $(\sigma^{\kap_1}R)(\sigma^{\kap_2}R)\dots(\sigma^{\kap_l}R)$. We see that $\la R^{\kap},X_{\tau}\ran=0$ whenever either $l=\ell(\kap)>|\tau|$ or $|\kap|>|\tau|$. Hence the set $\{\tau|\la R^{\kap},X_{\tau}\ran\neq 0\}$ is finite. Shift plethysm is an associative operation having $X_0$ as identity. 
\begin{proposition}\normalfont Every series $R$ with zero constant term and such that $\la R,X_0\rangle\neq 0$ has a two sided shift plethystic inverse, denoted $R^{\la-1\ran}$,
	$$R\spl R^{\la-1\ran}=R^{\la-1\ran}\spl R=X_0$$
	\end{proposition}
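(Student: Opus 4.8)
The plan is to grade $\K\laa\X\raa$ by word length $\ell$ and to solve $R\spl S=X_0$ one length-component at a time, exactly as one inverts a formal power series whose linear coefficient is a unit. Write $R=\sum_{l\ge 1}R_l$, where $R_l$ collects the terms $\la R,X_\kap\ran X_\kap$ with $\ell(\kap)=l$; there is no $l=0$ term since $\la R,1\ran=0$. The length-one part $R_1=\sum_{k\ge 0}a_kX_k$, with $a_k=\la R,X_k\ran$, determines a linear operator $L(U)=\sum_{k\ge 0}a_k\,\sigma^kU$, and the hypothesis $\la R,X_0\ran\neq 0$ is precisely that $a_0\neq 0$.

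First I would prove the key lemma that $L$ is invertible. Since each $\sigma^k$ preserves length, so does $L$, and one may work inside each length class separately. Writing $L=a_0\,\mathrm{id}+M$ with $M=\sum_{k\ge 1}a_k\sigma^k$, note that $M$ sends a length-$l$ word $X_\tau$ to a combination of words $X_{\tau+k\mathbf{1}}$ of weight $|\tau|+kl>|\tau|$, so $M$ strictly raises the weight $|\cdot|$. Hence $L^{-1}=a_0^{-1}\sum_{j\ge 0}(-a_0^{-1}M)^j$ is a well-defined continuous operator: for any target word only finitely many $j$ contribute, because $M^{j}$ raises weight by at least $j$. In particular $L^{-1}$ preserves length as well.

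Next I would extract the length-$l$ component of $R\spl S$. Since each factor $\sigma^{\kap_i}S$ has minimal length $1$, a word $X_\kap$ of length $j$ contributes to length $l$ only through choices $l_1+\dots+l_j=l$ with $l_i\ge 1$, selecting $\sigma^{\kap_i}S_{l_i}$ in each factor. The unique way to involve $S_l$ itself is $j=1$, which yields exactly $L(S_l)$; every other contribution has $j\ge 2$ and forces each $l_i\le l-1$. Thus
\begin{equation*}
(R\spl S)_l=L(S_l)+\Phi_l(S_1,\dots,S_{l-1}),
\end{equation*}
where $\Phi_l$ is a finite sum of products built from $S_1,\dots,S_{l-1}$, the finiteness of the $\kap$-sum for each target coefficient being guaranteed by the convergence remark following the definition of $\spl$. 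Demanding $(R\spl S)_1=X_0$ gives $S_1=L^{-1}(X_0)$, and demanding $(R\spl S)_l=0$ for $l\ge 2$ gives the recursion $S_l=-L^{-1}\big(\Phi_l(S_1,\dots,S_{l-1})\big)$. This determines a unique right inverse $S$, with $\la S,1\ran=0$ and $\la S,X_0\ran=a_0^{-1}\neq 0$.

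Finally I would upgrade this right inverse to a two-sided one. Because $S$ again has zero constant term and $\la S,X_0\ran\neq 0$, the construction just carried out applies to $S$ and produces a right inverse $S'$ with $S\spl S'=X_0$. Using associativity of $\spl$ and the identity $X_0$ (both already recorded), $R=R\spl X_0=R\spl(S\spl S')=(R\spl S)\spl S'=X_0\spl S'=S'$, so $S'=R$ and hence $S\spl R=X_0$ as well; thus $S=R^{\la-1\ran}$ is a genuine two-sided inverse. The main obstacle, and the only nonformal point, is the invertibility of $L$ on the infinite alphabet: everything else is the standard triangular recursion, but $L$ mixes infinitely many letters through the shift, so the weight filtration inside each length class is what makes $L^{-1}$, and therefore the whole construction, converge.
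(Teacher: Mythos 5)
Your proof is correct, and it takes a genuinely more explicit route than the paper's. The paper's proof is a short fixed-point argument: setting $\alpha=\la R,X_0\ran$ and $R_+=R-\alpha X_0$, it defines $T$ by the implicit equation $T=\alpha^{-1}(X_0-R_+\spl T)$ and rearranges this to $(\alpha X_0+R_+)\spl T=R\spl T=X_0$, concluding $R^{\la-1\ran}=T$. What you do differently is to supply the two points this leaves tacit. First, since $R_+$ still contains the length-one tail $\sum_{k\geq 1}a_kX_k$, the paper's implicit equation is not triangular in word length alone, and its solvability rests exactly on your key lemma: $L=a_0\,\mathrm{id}+M$ is invertible because $M=\sum_{k\geq 1}a_k\sigma^k$ strictly raises weight within each length class $l\geq 1$. (Note your inequality $|\tau|+kl>|\tau|$ does require $l\geq 1$; this is harmless because $L^{-1}$ is only ever applied to length-homogeneous series of positive length, but on constants $M$ need not even converge, so the restriction is worth stating.) Second, the paper's displayed computation establishes only the right inverse $R\spl T=X_0$ and then asserts two-sidedness; your associativity argument $S'=X_0\spl S'=(R\spl S)\spl S'=R\spl(S\spl S')=R\spl X_0=R$ is precisely the missing justification, and it legitimately uses only the associativity and identity properties the paper records just before the proposition. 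What the paper's route buys is brevity and an implicit characterization of the inverse that is exploited repeatedly in the later examples (e.g. reading off $\msA^{\la-1\ran}=X_0-X_0X_1$ from $\msA(1-\sigma\msA)=X_0$); what yours buys is an honest existence-and-uniqueness proof, since the degree-by-degree recursion through $L^{-1}$ is the real content hidden behind the words ``define the series $T$ by the implicit equation,'' and uniqueness is what entitles one to the notation $R^{\la-1\ran}$ in the first place.
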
 
\begin{proof}Let  $\alpha\neq 0$ be the value of  $R$ at $X_0$, $\la R,X_0\rangle=\alpha\neq 0$. Define $R_+=R-\alpha X_0$, and the series $T$ by the implicit equation
	\begin{equation*}
	T=\alpha^{-1}(X_0-R_+\spl T).
	\end{equation*}
From here we get $\alpha T+R_+\spl T=X_0$. Which can be written as $(\alpha X_0+R_+)\spl T=R\spl T=X_0$. Then, $R^{\la -1\ran}=T$.	
\end{proof}
\subsection{Shift-plethysm and $q$-composition of series}\label{sec.pletitrees}
In this subsection we show how shift-plethysm generalizes the classical definition of $q$-composition of series. This is relevant due to the importance of the $q$-Lagrange inversions formulas for applications in proving identities in $q$-series (see \cite{andrews1975qlagrange}, \cite{gessel1980noncommutative} \cite{garsia1981qLag}, \cite{gessel1983applications}, \cite{garsia1986novel},  \cite{krattenthaler1988qLag}). A general shift-plethystic Lagrange inversion, not yet found, would lead to new forms of $q$-Lagrange inversion as well as to the reinterpretation in a general context of the known ones. Shift-plethysm also offers the advantage, in contrast to $q$-substitution, of being an associative operation. From that,  the plethystic inverse is a bilateral one, also in contrast to the known forms of  $q$-composition inverse.  \\
 Let $C$ be the series  $$C=\sum_{n=1}^{\infty}c_nX_0X_1X_2\dots X_{n-1}.$$ Consider the shift plethysm $H:=C\spl R$, $R$ being an arbitrary series with zero constant term. We have  
 \begin{equation}\label{eq.qandshiftplethys}
H= C\spl R=\sum_{n=1}^{\infty}c_n  R(\sigma R)(\sigma^2R)\dots (\sigma^{n-1}R).
 \end{equation}
 Taking $q$-series, by Eq. (\ref{eq.shiftq}) we recover the classical $q$-substitution,
 \begin{equation}
H(z,q)= (C\spl R)(z,q)=\sum_{n=1}^{\infty}c_n  R(z,q)R(zq,q)R(zq^2,q)\dots R(zq^{n-1},q).
 \end{equation}

Now consider $R$ to be a series in the variable $X_0$, and express it in the form $R(X_0)=X_0\phi^{-1}(X_0)$. Shift plethysm with $C$ will give us
\begin{equation}
H=\sum_{n=1}^{\infty}c_n\frac{X_0}{\phi(X_0)}\frac{X_1}{\phi(X_1)}\dots\frac{X_{n-1}}{\phi(X_{n-1})}.
\end{equation} 
 Which, by $q$-umbral evaluation, gives
 \begin{equation}
H(z,q)=\sum_{n=1}^{\infty}c_n\frac{q^{\binom{n}{2}}z^n}{\phi(z)\phi(zq)\dots\phi(zq^{n-1})}.
 \end{equation} 
 Obtaining $c_n$ in terms of the $h_n$ in the expansion of $H(z,q)$ is similar to the $q$-Lagrange inversion problem in \cite{andrews1975qlagrange}.
 \subsection{Enriched shift-plethystic trees}
 In this section we introduce the $M$-enriched shift-plethystic trees, $M$ being a normalized invertible (non-commutative) series, based in the similar notion formalized by Joyal in \cite{joyal1981theorie} and its plethystic generalization in the commutative framework of colored species \cite{Mendezava}.  

 \begin{definition} Let $M$ be a series with constant term equal to $1$, $\la M,1\ran=1$. We define the $M$-enriched trees series by the implicit equation
 \begin{equation}\label{eq.shiftrees}
 \msA_M=X_0(M\spl \msA_M).
 \end{equation}\end{definition}
 \begin{proposition}\label{prop.treeinverse}The shift-plethystic inverse of $\msA_M$ is given by the formula
 \begin{equation*}(\msA_M)^{\la -1\ran}=X_0M^{-1}\end{equation*}
 \end{proposition}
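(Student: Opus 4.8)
The plan is to verify directly that $X_0 M^{-1}$ is a two-sided shift-plethystic inverse of $\msA_M$, using the defining implicit equation $\msA_M = X_0(M \spl \msA_M)$ together with the associativity of shift-plethysm and the fact (recorded in the excerpt) that $X_0$ is the identity for $\spl$. Since the preceding proposition guarantees that a series with zero constant term and nonzero coefficient at $X_0$ has a unique two-sided inverse, it suffices to check that $X_0 M^{-1}$ works on one side and invoke uniqueness, though I would aim to exhibit both compositions for clarity.

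First I would confirm that $X_0 M^{-1}$ is an admissible argument for $\spl$: it has zero constant term (the leading factor $X_0$ forces $\la X_0 M^{-1}, 1\ran = 0$), and its coefficient at $X_0$ equals $\la M^{-1},1\ran = 1 \neq 0$, so by the previous proposition its plethystic inverse exists and the computation below is legitimate. Next I would compute $\msA_M \spl (X_0 M^{-1})$. The key algebraic input is how $\spl$ interacts with ordinary multiplication and with the shift $\sigma$. Writing $\msA_M = X_0 \cdot (M \spl \msA_M)$, I want to substitute $X_0 M^{-1}$ for the variables; the outermost $X_0$ under shift-plethysm acts as the identity, so it should reproduce the argument $X_0 M^{-1}$ as a factor, and the inner $M \spl \msA_M$ should, after plethystic substitution and using associativity, collapse against $M^{-1}$ to yield $X_0$. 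Concretely, the target identity to unwind is
\begin{equation*}
\msA_M \spl (X_0 M^{-1}) = \bigl(X_0 (M \spl \msA_M)\bigr) \spl (X_0 M^{-1}) = (X_0 M^{-1}) \cdot \bigl((M \spl \msA_M) \spl (X_0 M^{-1})\bigr),
\end{equation*}
after which associativity rewrites the right-hand factor as $M \spl \bigl(\msA_M \spl (X_0 M^{-1})\bigr)$, setting up a fixed-point equation for the unknown $T := \msA_M \spl (X_0 M^{-1})$, namely $T = (X_0 M^{-1}) \cdot (M \spl T)$, whose unique solution I would identify as $X_0$ by checking $X_0 = X_0 M^{-1} \cdot (M \spl X_0) = X_0 M^{-1} \cdot M = X_0$.

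The step I expect to be the main obstacle is justifying the distributivity of $\spl$ over ordinary product under substitution, i.e. that $(R \cdot S) \spl U = (R \spl U)(S \spl U)$, and more delicately that the leading-$X_0$ factorization behaves as written when the substituted series itself has a nontrivial leading coefficient in $X_0$. One must be careful that shift-plethysm substitutes into the \emph{letters} of each word, so peeling off the front $X_0$ of $\msA_M$ corresponds under $\spl$ to the identity on the argument only because $X_0 \spl U = U$; the remaining letters, which all carry color $\geq 1$ inside $M \spl \msA_M$, get shifted, and tracking these shifts correctly against the factor $M^{-1}$ is where the associativity identity $ (T \spl R)\spl S = T \spl (R \spl S)$ must be applied with care. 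Once these structural identities are in hand, the fixed-point argument closes quickly, and the symmetric computation $(X_0 M^{-1}) \spl \msA_M = X_0$ follows by the same manipulation; appealing to the uniqueness of plethystic inverses from the previous proposition then completes the proof.
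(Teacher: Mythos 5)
Your argument is correct, but it runs the verification in the opposite direction from the paper, at greater cost. The paper's entire proof is the computation you defer to your final sentence as ``the symmetric computation'': since the map $S\mapsto S\spl\msA_M$ is a continuous algebra endomorphism (substitution acts letter-wise, so $(RS)\spl U=(R\spl U)(S\spl U)$) and $X_0\spl\msA_M=\msA_M$, the defining equation $\msA_M=X_0(M\spl\msA_M)$ gives in one line
\begin{equation*}
(X_0M^{-1})\spl\msA_M=\msA_M\,(M^{-1}\spl\msA_M)=X_0\,(M\spl\msA_M)(M^{-1}\spl\msA_M)=X_0\,\bigl((MM^{-1})\spl\msA_M\bigr)=X_0,
\end{equation*}
after which two-sidedness is free from the preceding proposition plus associativity, exactly as you yourself observe. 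You instead compute $\msA_M\spl(X_0M^{-1})$, where the unknown sits \emph{inside} the substitution; multiplicativity no longer disposes of it directly, which is why you are driven to the fixed-point equation $T=(X_0M^{-1})(M\spl T)$. Your derivation of that equation is sound (the uses of $(RS)\spl U=(R\spl U)(S\spl U)$, of $X_0\spl U=U$, and of associativity are all legitimate), and your preliminary checks that $X_0M^{-1}$ has zero constant term and coefficient $1$ at $X_0$ are correct.

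The one step you assert without proof is the uniqueness of the solution of the fixed-point equation. It does hold: writing $M=1+M_+$, the equation reads $T=X_0M^{-1}+(X_0M^{-1})(M_+\spl T)$, and since every word of $X_0M^{-1}$ has length at least one, the coefficient of $T$ on words of length $n$ is determined by its coefficients on words of length strictly less than $n$; this is the same induction that makes the defining equation for $\msA_M$ itself well posed, and it should be made explicit. Note also that, given your (correct) appeal to uniqueness of two-sided inverses, either one-sided check suffices, and the paper's side is the cheap one, so the fixed-point detour buys nothing here; what your analysis does isolate correctly is the structural asymmetry of shift-plethysm, namely that composition with a fixed series on the right of $\spl$ is an algebra endomorphism while composition on the left is not.
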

 \begin{proof}
 From Eq. (\ref{eq.shiftrees}) we have $\msA_M(M^{-1}\spl \msA_M)=(X_0 M^{-1})\spl\msA_M=X_0.$
 \end{proof}

The plethystic inversion for enriched trees, in the most elementary examples where the shift-plethystic inverse can be easily computed, leads by $q$-umbral evaluation to generalizations of some classical formulas.\\
\begin{example}\label{ex.shiftplethy}
	From Eq. (\ref{eq.rectree}), the SP trees $\msA$ satisfy the implicit equation
	\begin{equation}
	\msA=X_0(\frac{1}{1-X_1}\spl\msA).
	\end{equation}
	 Hence, they are obtained by enriching with the series $\frac{1}{1-X_1}$,
	$\msA=\msA_{\frac{1}{1-X_1}}$. From this we get
	
	\begin{equation*}
	\msA(1-\sigma\msA)=\msA-\msA\sigma\msA=(X_0-X_0X_1)\spl\msA=X_0,\end{equation*}
	its shift plethystic inverse 
	\begin{equation*}
	\msA^{\la -1\ran}=X_0-X_0X_1,
	\end{equation*}
	and the implicit equation
	\begin{equation*}
	\msA=X_0+(X_0X_1)\spl \msA.
	\end{equation*}
	 The $q$-series of the SP trees satisfies the implicit equations 
	\begin{eqnarray*}
		\msA(z,q)&=&z+\msA(z,q)\msA(zq,q)\\
		\msA(z,q)&=&\frac{z}{1-\msA(zq,q)}.
	\end{eqnarray*}
Those equations were studied by Garsia in \cite{garsia1981qLag} in relation with his $q$-Lagrange inversion formulas, but without any combinatorial interpretation.
\end{example}

	\begin{example} Let $\Lo$ be the language $$\Lo=1+X_0+X_0X_1+X_0X_1X_2+X_0X_1X_2X_3+\dots.$$ The language of the branchless trees, enriched with $1+X_1$ is equal to $\Lo_+=\Lo-1$,
	$$\Lo_+=\msA_{(1+X_1)}=X_0(1+\sigma\Lo_+).$$ Its shift-plethystic inverse is equal to $$\Lo_+^{\la -1\ran}=X_0\frac{1}{1+X_1}.$$ 
	Then,
	$$\Lo_+\spl (X_0\frac{1}{1+X_1})=\sum_{n=1}^{\infty}\prod_{j=1}^{n}X_{j-1}\frac{1}{1+X_{j}}=X_0.$$
	The $q$-umbral evaluation gives us
	$$\sum_{n=1}^{\infty}\frac{q^{\binom{n}{2}}z^n}{(1+zq)(1+zq^2)\dots(1+zq^n)}=z.$$
	
From that
$$\sum_{n=0}^{\infty}\frac{q^{\binom{n}{2}}z^n}{(1+zq)(1+zq^2)\dots(1+zq^n)}=1+z.$$
Making $z=1$ and $z=-1$ we recover respectively the classical identities $A.1$ and $A.4$ in \cite{sills2017invitation}.
 \end{example}
\begin{example}
	Let $\Lo^{(e)}_+$ be the even form of $\Lo_+$,
	\begin{equation*}\Lo^{(e)}_+=\sum_{n=0}^{\infty}X_0X_2X_4\dots X_{2n-2}=X_0(1+\sh^2\Lo_+^{(e)})=\msA_{(1+X_2)}.
	\end{equation*}
	Its shift-plethystic inverse is equal to \begin{equation*}
(\Lo^{(e)}_+)^{\la -1\ran}=X_0\frac{1}{1+X_2}.
	\end{equation*}
	Hence we have the identity
	\begin{equation}\label{eq.Rogers171}
	\Lo^{(e)}_+\spl X_0\frac{1}{1+X_2}=	\sum_{n=1}^{\infty}\prod_{j=0}^{n-1}X_{2j}\frac{1}{1+X_{2j+2}}=X_0.
	\end{equation}
	The odd version of $\Lo_+$, $\Lo^{(o)}_+$, is equal to the shift $\sh\Lo^{(e)}_+$. 
	$$\Lo^{(o)}_+=\sum_{n=1}^{\infty}X_1X_3X_5\dots X_{2n-1}.$$
	
	From Eq.(\ref{eq.Rogers171}), by shifting and adding $1$  we obtain \begin{equation*}
\Lo^{(o)}\spl X_0\frac{1}{1+X_2}=1+\Lo^{(o)}_+\spl X_0\frac{1}{1+X_2}=1+\sum_{n=1}^{\infty}\prod_{j=0}^{n-1}X_{2j+1}\frac{1}{1+X_{2j+3}}=1+X_1.
	\end{equation*}
	Multiplying by $(1+X_1)^{-1}$ the left of both sides of the rightmost equality
	$$\frac{1}{1+X_1}+\sum_{n=1}^{\infty}\frac{1}{1+X_1}\prod_{j=0}^{n-1}X_{2j+1}\frac{1}{1+X_{2j+3}}=1.$$
	By $q$-umbral evaluation we obtain
	\begin{equation}\label{eq.Rogers17}
	\sum_{n=0}^{\infty}\frac{q^{n^2}z^n}{(1+zq)(1+zq^3)\dots(1+zq^{2n+1})}=1.
	\end{equation}
	\noindent Eq. (\ref{eq.Rogers17}) generalizes Rogers identity (C) 6 in \cite[p.~333]{Rogers17}, obtained by specializing to $z=-1$. See also \cite{sills2017invitation}, Formula (A.2).\end{example}
	\begin{example}Denote by $\Lsl$ the language obtained from $\Lo_+$ by left shift plethysm with the series $
\Al_0=\sum_{j=0}^{\infty}X_j,$
	\begin{equation}\label{eq.sigmaL}
	\Lsl=\sum_{j=0}^{\infty}X_j\spl \Lo_+=\sum_{j=0}^{\infty}\sum_{n=1}^{\infty}X_jX_{j+1}\dots X_{n+j-1}.
	\end{equation}
	Since $\Al_0-\sigma\Al_0=X_0$, the shift-plethystic inverse of $\Al_0$ is equal to $X_0-X_1$. Hence
	\begin{equation*}(\Lsl)^{\la -1\ran}=(\Al_0\spl\Lo_+)^{\la -1\ran}=(\Lo_+)^{\la -1\ran}\spl(X_0-X_1)=(X_0-X_1)\frac{1}{1+(X_0-X_1)}.\end{equation*}
	By plethystic composition with $\Lsl$ we obtain the identity
	\begin{equation*}
	\sum_{j=0}^{\infty}\sum_{n=1}^{\infty}(X_j-X_{j+1})\frac{1}{1+(X_{j+1}-X_{j+2})}\dots (X_{n+j-1}-X_{n+j})\frac{1}{1+(X_{n+j}-X_{n+j+1})}=X_0.
	\end{equation*}
	Interchanging sums, by $q$-umbral evaluation,
	\begin{equation}
	\sum_{n=1}^{\infty}z^nq^{\binom{n}{2}}(1-q)^n\sum_{j=0}^{\infty}\frac{q^{jn}}{\prod_{k=1}^n({1-zq^{j+k}(1-q))}}=z.
	\end{equation}
	Equivalently, making the change $z(1-q)\mapsto z$, 
		\begin{equation}
	\sum_{n=1}^{\infty}z^nq^{\binom{n}{2}}\sum_{j=0}^{\infty}\frac{q^{jn}}{\prod_{k=1}^n({1-zq^{j+k})}}=\frac{z}{1-q}.
	\end{equation}
		\end{example}

\begin{example} 
	 The shift-plethystic trees enriched with the language $\sigma\Lo$, $\msA_{\sigma\Lo}$, satisfy the equation
	\begin{equation*}
	\msA_{\sigma\Lo}=X_0(1+\sigma\msA_{\sigma\Lo}+(\sigma\msA_{\sigma\Lo})(\sigma^2\msA_{\sigma\Lo})+\dots).
	\end{equation*}
	Its shift-plethystic inverse is equal to 
	\begin{equation*}
	\msA_{\sigma\Lo}^{\la -1\ran}=X_0(\sigma\Lo)^{-1}=X_0(1+X_1+X_1X_2+X_1X_2X_3+\dots)^{-1}.
	\end{equation*} 
\end{example}
\begin{example}
	 The series of shift-plethystic trees enriched with  $$M=(1-\sigma\Lo_+)^{-1}=(1-(X_1+X_1X_2+X_1X_2X_3+\dots))^{-1}$$
	  satisfies the implicit equations
	 \begin{eqnarray*}
	 	\msA_{M}&=&X_0\frac{1}{1-\sigma\Lo_+\spl \msA_{M}}\\\msA_{M}&=&X_0+(\msA_M)(\sigma\msA_M)+(\msA_M)(\sigma\msA_M)(\sigma^2\msA_M)+\dots.
	 		\end{eqnarray*}
	 Its shift-plethystic inverse is equal to
	 $$\msA_M^{\la -1\ran}=X_0-X_0\sigma \Lo_+.$$
	 Taking $q$-series we obtain the implicit equation
	 $$\msA_M(z,q)=z+\msA_M(z,q)\msA_M(zq,q)+\msA_M(z,q)\msA_M(zq,q)\msA_M(zq^2,q)+\dots$$
	 \end{example}
\section{Some other shift-plethystic identities}\label{sec.RRandnew} In this section we establish some relations between the languages of partitions, compositions and shifted plethystic trees.
As a motivating example for Theorem \ref{th.plethypartition}, let us take the following composition in $\C^{(1)},$
$$\kap=56763454343342332.$$ Placing a bar before each local (non-strict) minimum of the sequence,
$$\textcolor{red}{|}5676\textcolor{red}{|}3454\textcolor{red}{|}34\textcolor{red}{|}3\textcolor{red}{|}34\textcolor{red}{|}233\textcolor{red}{|}2.$$ We see that the local minima form a partition in weakly decreasing form $\lam=5333322$. Each word between two bars is associated to a word of a shifted plethystic tree, $$X_{5676}X_{3454}X_{34}X_{3}X_{34}X_{233}X_2$$ is in the language $(\sh^5\msA)(\sh^3\msA)(\sh^3\msA)(\sh^3\msA)(\sh^3\msA)(\sh^2\msA)(\sh^2\msA)=X_{\lam}\spl\msA.$

\begin{theorem}\label{th.plethypartition}We have the following identities
	\begin{eqnarray}\label{eq.l1identity}
	\C^{(1)}&=&\prod_{n=\infty}^{1}\frac{1}{1-X_n}\spl\msA=\prod_{n=\infty}^{1}\frac{1}{1-\sh^n\msA}.\\ \label{eq.l2identity}\sh\C^{(1)}&=&\prod_{n=\infty}^{2}\frac{1}{1-X_n}\spl\msA=\prod_{n=\infty}^{2}\frac{1}{1-\sh^n\msA}.
	\end{eqnarray}
	
\end{theorem}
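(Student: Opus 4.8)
The plan is to prove the first identity (\ref{eq.l1identity}) by a bijection that realizes the ``cut at the weak descent floors'' decomposition of the motivating example, and then to obtain (\ref{eq.l2identity}) for free by applying the shift. First I would dispose of the rightmost equality in each line, which is purely formal: from the definition $X_{\kap}X_{\tau}\spl R=(X_{\kap}\spl R)(X_{\tau}\spl R)$ one sees that $\,\cdot\,\spl\msA$ is linear, multiplicative and continuous, so it commutes with the convergent product; combined with $X_n\spl\msA=\sh^n\msA$ and $\frac{1}{1-X_n}\spl\msA=\sum_{m\ge0}(\sh^n\msA)^m=\frac{1}{1-\sh^n\msA}$, this gives $\left(\prod_{n=\infty}^{1}\frac{1}{1-X_n}\right)\spl\msA=\prod_{n=\infty}^{1}\frac{1}{1-\sh^n\msA}$. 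Hence the whole content is the leftmost equality $\C^{(1)}=\prod_{n=\infty}^{1}\frac{1}{1-\sh^n\msA}$.

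Next I would record the one structural fact that drives everything: by Proposition \ref{prop.tree-comp}, after applying $\sh^n$, a nonempty word of $\sh^n\msA$ has the form $X_n X_{\mu}$ where $\mu$ is empty or a composition with $\mu_1=n+1$, risings at most $1$, and \emph{every} part $\ge n+1$; in particular the root value $n$ occurs only as the initial letter. Now $\prod_{n=\infty}^{1}\frac{1}{1-\sh^n\msA}$ expands as the sum over finite sequences of tree-blocks $B_1B_2\cdots B_r$ with $B_j\in\sh^{\lam_j}\msA$ and $\lam_1\ge\lam_2\ge\cdots\ge\lam_r\ge1$ (the blocks of a common level coming from one geometric factor, higher levels to the left). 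For any such sequence the concatenation $B_1\cdots B_r$ lies in $\C^{(1)}$: inside a block the risings are at most $1$, and at a junction the last letter of $B_j$ has value $\ge\lam_j\ge\lam_{j+1}$, the first letter of $B_{j+1}$, so the junction rising is $\le 0$. This yields the inclusion of supports and shows concatenation is well defined on these sequences.

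The heart of the argument, and the step I expect to be the main obstacle, is to show that every $\kap\in\C^{(1)}$ factors as such a block sequence in exactly one way. I would construct the factorization greedily: put $i_1=1$ and, once a block with root $\rho_j:=\kap_{i_j}$ has been opened, let $i_{j+1}$ be the least index $>i_j$ with $\kap_{i_{j+1}}\le\rho_j$, the $j$-th block being $X_{\kap_{i_j}}\cdots X_{\kap_{i_{j+1}-1}}$. By construction $\rho_{j+1}=\kap_{i_{j+1}}\le\rho_j$, so the roots weakly decrease; every later letter of a block exceeds its root, and the rising bound forces a block's second letter (when present) to equal $\rho_j+1$, so each block satisfies the characterization of $\sh^{\rho_j}\msA$ recalled above. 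The delicate point is uniqueness: I must show these greedy cuts coincide with the boundaries of \emph{any} admissible factorization. This is exactly where the structural fact is indispensable, since within a single shifted tree-block the root value is attained only at its first letter, a block at level $\ell$ can never merge with, nor be split across, a later block of level $\le\ell$; consequently the boundaries are forced to be the positions at which the running value first falls to or below the current root, which is precisely what the greedy rule selects. Hence concatenation and greedy parsing are mutually inverse, each word of $\C^{(1)}$ arises exactly once, and the two series coincide, proving (\ref{eq.l1identity}).

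Finally I would deduce (\ref{eq.l2identity}) by applying $\sh$ to (\ref{eq.l1identity}). Since $\sh$ is a continuous algebra endomorphism it commutes with the convergent product and with $B\mapsto(1-B)^{-1}$, so it sends $\sh^n\msA\mapsto\sh^{n+1}\msA$ and the product $\prod_{n=\infty}^{1}$ to $\prod_{n=\infty}^{2}$; on the left it sends $\C^{(1)}$ to $\sh\C^{(1)}$, the language of compositions with risings at most $1$ and all parts at least $2$. This yields $\sh\C^{(1)}=\prod_{n=\infty}^{2}\frac{1}{1-\sh^n\msA}=\prod_{n=\infty}^{2}\frac{1}{1-X_n}\spl\msA$, which is (\ref{eq.l2identity}). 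Alternatively, the same greedy decomposition applied directly to compositions whose parts are all $\ge2$ gives (\ref{eq.l2identity}), the block roots now lying in $\{2,3,\dots\}$.
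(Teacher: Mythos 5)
Your proposal is correct and takes essentially the same route as the paper: the same greedy factorization of a composition in $\C^{(1)}$ cut at the positions where the running value first drops to at most the current block root (the paper's indices $i_r$ and weakly decreasing partition $\lam$), with each block recognized as a word of $\sh^{\lam_r}\msA$ via Proposition \ref{prop.tree-comp}, and with (\ref{eq.l2identity}) deduced by applying the shift. You make explicit two points the paper leaves implicit --- the uniqueness of the block boundaries and the fact that $\,\cdot\,\spl\msA$ commutes with the infinite product --- but these are elaborations of the same argument, not a different one.
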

\begin{proof}
	Let $\kap$ be a composition in $\C^{(1)}$. Define $i_1=1$ and $\lam_1=\kap_1$, and while the set $A_{r-1}=\{i>i_{r-1}|\kap_i\leq\kap_{i_{r-1}}=\lam_{r-1}\}$ is nonempty define recursively
	$$i_r=\mbox{min}A_{r-1}\mbox{ and }\lam_{r}=\kap_{i_r}.$$ 
	Let $\kap^{(r)}$ be the segment of $\kap$ after (and including) $\lam_r=\kap_{i_r}$ and before (and excluding) $k_{i_{r+1}}=\lam_{r+1}$. We claim that each word $X_{\kap^{(r)}}$ is in the language $\sh^{\lam_r}\msA$. If $\ell(\kap^{(r)})=1$ the statement is trivial. If  $\ell(\kap^{(r)})>1$, it follows since $\kap^{(r)}$ is in $\C^{(1)}$ and all of its parts after $\lam_r$ (the shifted height of the root) are greater  than it.  Hence, for each word $X_{\kap}\in \C^{(1)}$, there exists a unique partition $\lam_1\geq\lam_2\geq\dots\geq\lam_l,$ as defined above, such that $X_{\kap}\in X_{\lam}\spl\msA$. Conversely, every word in $ X_{\lam}\spl\msA$ is in $\C^{(1)}$.  Then $\C^{(1)}$ can be expanded as follows, using the generating function of the weakly decreasing partitions (\ref{eq.pertitiinsdecreasing}) 
	\begin{equation*}
	\C^{(1)}=\sum_{\lam}X_{\lam}\spl\msA=\prod_{n=\infty}^1\frac{1}{1-X_n}\spl\msA.
	\end{equation*}
	Eq. (\ref{eq.l2identity}) follows immediately by shifting.
	
\end{proof}
From Eq.(\ref{eq.l1identity}) we get
 \begin{equation}
(\C^{(1)})^{g}=(\prod_{n=\infty}^1\frac{1}{1-X_n})\spl\msA(-\X)
 \end{equation}
 \noindent where $\msA(-\X)$ is the graded generating function of $\msA$. Using the fact that the shift-plethystic inverses of $\msA$ and $\msA(-\X)$ are respectively equal to $X_0-X_0X_1$ and $X_0X_1-X_0$, we get the identities
 \begin{eqnarray}\label{eq.inverseapp1}
 \C^{(1)}\spl (X_0-X_0X_1)&=&\prod_{n=\infty}^1\frac{1}{1-X_n}\\\label{eq.inverseapp2}
 \Lb_2\spl(X_0X_1-X_0)&=&((\C^{(1)})^{g})^{-1}\spl (X_0X_1-X_0)=\prod_{n=1}^{\infty}(1-X_n)
 \end{eqnarray}
 The left hand side of equations (\ref{eq.inverseapp1}) and (\ref{eq.inverseapp2}) are respectively equal to  
  \begin{eqnarray*}
 \C^{(1)}\spl (X_0-X_0X_1)&=&\sum_{\kap\in\C^{(1)}}\prod_{i=1}^{\ell(\kap)}X_{\kap_i}(1-X_{\kap_i+1})\\
 \Lb_2\spl(X_0X_1-X_0)&=&\sum_{\lam\in\Lb_2}\prod_{i=1}^{\ell(\lam)}X_{\lam_i}(X_{\lam_i+1}-1)\\
 \end{eqnarray*}
\noindent Substituting   $X_{n}\mapsto q^n,$ we get the identities
 \begin{eqnarray*}
	\sum_{n=0}^{\infty}q^n\sum_{\kap\in\C^{(1)}[n]}\prod_{i=1}^{\ell(\kap)}(1-q^{\kap_i+1})&=&\prod_{n=1}^{\infty}\frac{1}{1-q^n}\\
	\sum_{n=0}^{\infty}q^n\sum_{\lam\in\Lb_2[n]}\prod_{i=1}^{\ell(\lam)}(q^{\lam_i+1}-1)&=&\prod_{n=1}^{\infty}{(1-q^n)}.
\end{eqnarray*}


\bibliographystyle{amsplain}

\end{document}